\documentclass[12pt]{article}
\usepackage[utf8]{inputenc}
\usepackage{amsmath}
\usepackage{amssymb}
\usepackage{xcolor}
\usepackage{geometry}
\newtheorem{proposition}{Proposition}
\newtheorem{remark}{Remark}%

\newtheorem{definition}{Definition}
\newtheorem{lemma}{Lemma}
\newtheorem{proof}{Proof}
\newtheorem{theorem}{Theorem}

\begin{document}
	\title{Stochastic Optimal Control of Decoupled Reflected FBSDEs: A Variational Approach with Residual Reflection Terms}
\author{
Hanane Ben-Gherbal\thanks{Department of Mathematics, Laboratory of Mathematical Analysis, Probability and
Optimizations, University of Biskra, Po. Box 145, 07000 Biskra, Algeria.
\texttt{h.bengherbal@yahoo.com, hananebengherbal@univ-biskra.dz}}
\and
Omar Kebiri\thanks{Department of Stochastics and its Applications, Brandenburg University of Technology Cottbus--Senftenberg, Konrad-Wachsmann-Allee 1, 03046 Cottbus, Germany.
\\ Corresponding author: \texttt{h.bengherbal@yahoo.com}}
}

\makeatletter
\renewcommand{\@date}{} 
\makeatother
	\maketitle

\textbf{Abstract:} This paper studies necessary optimality conditions for stochastic
control problems governed by decoupled forward--backward stochastic differential
equations with reflection constraints on the backward component. The cost functional
is defined through the initial value of a reflected BSDE whose lower obstacle depends
on the forward state. The nonsmooth nature of the reflection term prevents the direct
application of classical variational methods.

Using a penalization approach, spike variations, and duality arguments, we derive a
penalized variational relation. A key feature of our method is to isolate the
reflection contribution into a residual term, avoiding the differentiation of the
nonsmooth penalization operator while preserving a Hamiltonian adjoint structure analogous to the classical stochastic maximum principle. Passing to the limit yields a reflected variational inequality of
Pontryagin type, where the obstacle effect is explicitly represented through a
residual contribution associated with the reflection term.

\textbf{Keywords:} Reflected BSDEs; Decoupled FBSDEs; Stochastic optimal control;
Penalization method; Spike variation; Adjoint equations; Variational inequality.
\section{Introduction}

Backward stochastic differential equations (BSDEs), introduced by Pardoux and Peng, have become a fundamental tool in stochastic analysis, mathematical finance, stochastic control, and partial differential equations. Their extension to forward--backward stochastic differential equations (FBSDEs) provides a natural probabilistic framework for stochastic optimal control problems, where the forward component describes the state dynamics while the backward component represents the associated cost functional or value process.

The development of the stochastic maximum principle (SMP) for systems governed by
forward--backward stochastic differential equations (FBSDEs) has followed two main
directions: the decoupled and the fully coupled settings. For \textbf{decoupled
FBSDEs}, where the forward state process does not interact with the backward
components, the SMP was established through the pioneering works of
\cite{Peng1993} and \cite{PengWu1999}. These works showed that optimality
conditions could be characterized through adjoint equations, with the backward
component representing the cost functional or value process. The theory was later
extended to \textbf{fully coupled FBSDEs}, where the forward and backward
components interact mutually through their coefficients. This extension required
new analytical techniques due to the bidirectional feedback between the state and
cost dynamics. In this framework, fundamental contributions were made by
\cite{HuPeng1995} and \cite{MaYong1999}, who developed monotonicity conditions
and the continuation method to establish well-posedness. These results provided
the foundation for the derivation of Hamiltonian systems incorporating the mutual
influence of the forward and backward components and paved the way for subsequent
developments involving non-convex control domains and more general constraints.

In recent years, stochastic control problems involving nonlinear expectations, G-Brownian motion, delayed systems, jumps, and mean-field interactions have also attracted considerable attention. Several existence and optimality results have been obtained in these generalized frameworks. In particular, Kebiri and his collaborators contributed to the study of stochastic optimal control under G-expectation and generalized FBSDE systems. Existence results for coupled non-degenerate and degenerate coupled FBSDE control systems were studied in \cite{BahlaliKebiriMezerdiMtiraoui2018,BahlaliKebiriMtiraoui2017}. Relaxed optimal control problems for G-SDEs and G-FBSDEs were investigated in \cite{RedjilGherbalKebiri2023,BouananiKebiriHartmannRedjil2024}, and relaxed maximum principles under G-expectation with jumps were derived in \cite{GherbalRedjilKebiri2022}. Additional developments concerning stochastic functional differential equations, delayed systems, and applications to economics and epidemiology may be found in \cite{ElgroudBoutabiaRedjilKebiri2022,KebiriElgroud2024,LalaouiBalatifKebiri2024,EddahbiKebiriSene2023,GridAgramKebiriOksendal2025}.

On the other hand, reflected backward stochastic differential equations (RBSDEs), introduced by El Karoui et al. \cite{ElKarouiKapoudjianPardouxPengQuenez1997}, constitute an important extension of BSDEs under state inequality constraints. In an RBSDE, the backward solution is constrained to remain above a prescribed obstacle through an increasing reflection process. RBSDEs arise naturally in obstacle problems for partial differential equations, American option pricing, Dynkin games, and constrained stochastic control.

The presence of reflection introduces a major analytical difficulty in stochastic optimal control problems. Indeed, the reflection term is nonsmooth and generally prevents the direct application of classical variational arguments. As a consequence, deriving a Pontryagin-type maximum principle for reflected systems requires new analytical tools capable of handling the obstacle constraint and the singular reflection measure.

Recently, in \cite{BenGherbalKebiri2026}, the authors established a Pontryagin maximum principle for stochastic control problems governed by reflected BSDEs. Using a penalization approach combined with spike variation and duality techniques, the reflected problem was approximated by a family of Lipschitz penalized BSDEs. Passing to the limit led to an adjoint formulation in which the effect of the
reflection appears through a singular contribution associated with the active
constraint set.

The purpose of the present paper is to extend this line of research to stochastic control systems governed by decoupled forward stochastic differential equations together with reflected backward cost dynamics. More precisely, we consider a control problem where the forward component evolves according to a controlled SDE, while the cost functional is represented by a reflected BSDE constrained by a lower obstacle depending on the forward state process. The control acts both on the forward dynamics and on the generator of the backward equation.

Compared with the framework studied in \cite{BenGherbalKebiri2026}, the present setting involves an additional forward stochastic dynamics coupled with the reflected backward cost through the obstacle and generator coefficients. This creates new interactions between the forward variational equations, the reflected backward variations, and the adjoint system. In particular, the reflection contribution must be carefully separated from the smooth part of the variational structure in order to preserve a tractable Hamiltonian formulation.

To overcome these difficulties, we employ a penalization procedure for the reflected
BSDE and derive first-order variational equations for the corresponding penalized
systems through spike perturbations of the control. A key feature of our approach
is to isolate the nonsmooth reflection contribution into a residual term rather than
incorporating it into the linearized coefficients. This formulation avoids any
differentiation of the nonsmooth penalization operator and preserves the classical
structure of the adjoint equations while keeping the obstacle effect explicitly in
the limiting variational relation. Passing to the limit in the penalized problems
then yields a reflected variational inequality of Pontryagin type, where the impact
of the reflection is represented through a residual contribution associated with
the penalization procedure.

The paper is organized as follows. In Section 2, we formulate the stochastic
control problem and introduce the standing assumptions. Section 3 is devoted to
the penalized approximation of the reflected system and the corresponding
convergence results. In Section 4, we derive the spike variation equations and
establish the first-order expansion of the penalized cost functional. Section 5
introduces the adjoint equations and proves the penalized variational inequality.
Finally, passing to the limit in the penalized formulation leads to the reflected
Pontryagin-type variational inequality for the original control problem.

\section{Problem Formulation and Standing Assumptions}

Let $(\Omega,\mathcal F,\mathbb F,\mathbb P)$ be a complete filtered probability space,
with $\mathbb F=(\mathcal F_t)_{0\le t\le T}$ satisfying the usual conditions, and carrying
a $d$-dimensional Brownian motion $W=(W_t)_{0\le t\le T}$.
Let $U\subset \mathbb R^m$ be a nonempty compact set.
We denote by $\mathcal U_{ad}$ the set of all $\mathbb F$-progressively measurable processes
$u=(u_t)_{0\le t\le T}$ with values in $U$ such that
\[ 
\mathbb E\int_0^T |u_t|^2\,dt<\infty.
\]

For each admissible control $u\in\mathcal U_{ad}$, we consider the decoupled forward backward reflected system
\begin{equation}\label{state-system}
\left\{
\begin{aligned}
&dX_t^u = b(t,X_t^u,u_t)\,dt+\sigma(t,X_t^u)\,dW_t,\qquad X_0^u=x,\\[1mm]
&Y_t^u = \Phi(X_T^u)+\int_t^T f(s,X_s^u,Y_s^u,Z_s^u,u_s)\,ds +K_T^u-K_t^u-\int_t^T Z_s^u\,dW_s,\\[1mm]
&Y_t^u \ge h(t,X_t^u),\qquad 0\le t\le T,\\[1mm]
&\int_0^T \big(Y_t^u-h(t,X_t^u)\big)\,dK_t^u=0.
\end{aligned}
\right.
\end{equation}
Here:
\begin{itemize}
\item $X^u$ is the controlled forward state process,
\item $(Y^u,Z^u,K^u)$ is the solution of the reflected backward stochastic differential equation
associated with the control $u$,
\item $h(t,X_t^u)$ is the lower obstacle,
\item $K^u$ is a continuous, nondecreasing, adapted process with $K_0^u=0$ enforcing the constraint
$Y_t^u\ge h(t,X_t^u)$.
\end{itemize}

The cost functional is defined by
\begin{equation}\label{cost-functional}
J(u):=Y_0^u.
\end{equation}
Our objective is to find an optimal control $u^*\in\mathcal U_{ad}$ such that
\begin{equation}\label{optimal-control-problem}
J(u^*)=\inf_{u\in\mathcal U_{ad}}J(u).
\end{equation}

\subsection{Standing Assumptions}

Throughout the paper, we impose the following assumptions.\\
\noindent\textbf{(A1) Regularity of the forward coefficients.}
The functions
\[
b:[0,T]\times\mathbb R^n\times U\to\mathbb R^n,
\qquad
\sigma:[0,T]\times\mathbb R^n\to\mathbb R^{n\times d}
\]
are measurable and continuously differentiable with respect to the state
variable \(x\). Their derivatives \(b_x\) and \(\sigma_x\) are bounded and
uniformly continuous with respect to \(x\), uniformly in the remaining
arguments.

Moreover, there exists a constant \(C>0\) such that, for all
\[
(t,x_1,x_2,u)
\in
[0,T]\times\mathbb R^n\times\mathbb R^n\times U,
\]
\[
|b(t,x_1,u)-b(t,x_2,u)|
+
|\sigma(t,x_1)-\sigma(t,x_2)|
\le
C|x_1-x_2|,
\]
and
\[
|b(t,0,u)|
\le
C(1+|u|).
\]

\medskip
\noindent\textbf{(A2) Regularity of the generator.}
The function
\[
f:
[0,T]\times\mathbb R^n\times\mathbb R
\times\mathbb R^d\times U
\to\mathbb R
\]
is measurable and continuously differentiable with respect to
\((x,y,z)\). The partial derivatives
\[
f_x,\qquad f_y,\qquad f_z
\]
are bounded and uniformly continuous with respect to
\((x,y,z)\), uniformly in the remaining arguments.

In particular, there exists a constant \(C>0\) such that
\[
|f(t,x_1,y_1,z_1,u)-f(t,x_2,y_2,z_2,u)|
\le
C\big(
|x_1-x_2|
+
|y_1-y_2|
+
|z_1-z_2|
\big),
\]
for all admissible arguments, and
\[
|f(t,x,0,0,u)|
\le
C(1+|x|+|u|).
\]

\medskip
\noindent\textbf{(A3) Terminal condition.}
The function
\[
\Phi:\mathbb R^n\to\mathbb R
\]
is continuously differentiable, and its derivative \(\Phi_x\) is bounded
and uniformly continuous.

In particular, \(\Phi\) satisfies a linear growth condition; that is,
there exists a constant \(C>0\) such that
\[
|\Phi(x)|
\le
C(1+|x|),
\qquad
\forall x\in\mathbb R^n.
\]
Moreover, for every \(u\in\mathcal U_{ad}\),
\[
\Phi(X_T^u)
\in
L^2(\Omega,\mathcal F_T,\mathbb P).
\]

\medskip
\noindent\textbf{(A4) Obstacle.}
The function
\[
h:[0,T]\times\mathbb R^n\to\mathbb R
\]
is continuous in $t$, continuously differentiable in $x$, and $h_x$ is bounded.
In addition, for every admissible control $u\in\mathcal U_{ad}$,
\[
h(T,X_T^u)\le \Phi(X_T^u),\qquad \mathbb P\text{-a.s.}
\]

\medskip

\noindent\textbf{(A5) Regularity of the lower obstacle.}
For every admissible control \(u\in\mathcal U_{ad}\), the obstacle process
\[
S_t^u:=h(t,X_t^u),\qquad 0\le t\le T,
\]
is regular in the sense that it admits the It\^o decomposition
\[
dS_t^u=\alpha_t^u\,dt+\beta_t^u\,dW_t,
\]
where \(\alpha^u\) and \(\beta^u\) are progressively measurable and satisfy
\[
\mathbb E\int_0^T|\alpha_t^u|^2\,dt
+
\mathbb E\int_0^T|\beta_t^u|^2\,dt
<\infty .
\]
Moreover, there exists a constant \(C>0\), independent of \(u\), such that
\[
\mathbb E\int_0^T|\alpha_t^u|^2\,dt
+
\mathbb E\int_0^T|\beta_t^u|^2\,dt
\le
C\left(
1+|x|^2+
\mathbb E\int_0^T|u_t|^2\,dt
\right).
\]
\noindent\textbf{(A6) Admissible controls.}
The control region $U$ is a nonempty compact subset of $\mathbb R^m$, and
\[
\mathcal U_{ad}:=
\left\{
u:[0,T]\times\Omega\to U\ \text{progressively measurable}:
\mathbb E\int_0^T |u_t|^2\,dt<\infty
\right\}.
\]

\medskip
Under assumptions \textbf{(A1)}--\textbf{(A6)}, for each $u\in\mathcal U_{ad}$, the forward SDE admits a unique strong solution $X^u$,
and the reflected BSDE in \eqref{state-system} admits a unique solution $(Y^u,Z^u,K^u)$ in the standard spaces.
Hence the cost functional \eqref{cost-functional} is well defined.

\section{Penalization and Approximation of the Reflected Problem}

In order to handle the reflection term, we approximate the reflected BSDE in \eqref{state-system}
by a family of penalized standard BSDEs.
For each integer $n\ge1$ and each control $u\in\mathcal U_{ad}$, we consider
\begin{equation}\label{penalized-system}
\left\{
\begin{aligned}
dX_t^u &= b(t,X_t^u,u_t)\,dt+\sigma(t,X_t^u)\,dW_t,\qquad X_0^u=x,\\[1mm]
Y_t^{n,u}
&=
\Phi(X_T^u)
+\int_t^T
\Big(
f(s,X_s^u,Y_s^{n,u},Z_s^{n,u},u_s)
+n\big(h(s,X_s^u)-Y_s^{n,u}\big)^+
\Big)\,ds\\
&\hspace{5cm}
-\int_t^T Z_s^{n,u}\,dW_s.
\end{aligned}
\right.
\end{equation}
We also define the associated penalization process
\begin{equation}\label{penalization-process}
K_t^{n,u}:=\int_0^t n\big(h(s,X_s^u)-Y_s^{n,u}\big)^+\,ds,\qquad 0\le t\le T.
\end{equation}

For each $n\ge1$, the corresponding penalized cost functional is given by
\begin{equation}\label{penalized-cost}
J_n(u):=Y_0^{n,u}.
\end{equation}
For later use, we introduce the penalized generator
\begin{equation}\label{penalized-generator}
f^n(t,x,y,z,u):=f(t,x,y,z,u)+n\big(h(t,x)-y\big)^+.
\end{equation}
Then \eqref{penalized-system} may be rewritten more compactly as
\begin{equation}\label{compact-penalized-system}
Y_t^{n,u}
=
\Phi(X_T^u)
+\int_t^T f^n(s,X_s^u,Y_s^{n,u},Z_s^{n,u},u_s)\,ds
-\int_t^T Z_s^{n,u}\,dW_s.
\end{equation}

\subsection*{Well-posedness of the penalized equation}

The first step in the derivation of the variational inequality is to establish the well-posedness and convergence properties of the penalized family \eqref{penalized-system}. We begin with a uniform bound on the penalization term, then show that the constraint violation vanishes asymptotically, and finally establish uniform moment estimates for $(Y^{n,u},Z^{n,u},K^{n,u})$.

After the change of sign reducing the lower-barrier penalization to the upper-barrier scheme, the convergence analysis follows by adapting the penalization argument of ~\cite{EssakyOuknineHarraj2005}. In particular, the uniform estimates, together with the vanishing of the barrier violation and the corresponding stability estimates, yield the strong convergence of $(Y^{n,u})_{n\geq1}$ in $S^2$ and of $(Z^{n,u})_{n\geq1}$ in $H^2$. The representation of \(K^{n,u}\) through the penalized BSDE, combined with the stability estimates and the convergence of \(Y^{n,u}\) and \(Z^{n,u}\), then yields the convergence of \(K^{n,u}\) in \(S^2\).

\begin{lemma}
\label{lem:penal-control}
Assume \textbf{(A1)}--\textbf{(A6)}. Then, for every
\(u\in\mathcal U_{ad}\), there exists a constant \(C_u>0\), independent of
\(n\), such that
\begin{equation}\label{essaky-type-estimate}
\mathbb E\int_0^T
n^2\big(h(t,X_t^u)-Y_t^{n,u}\big)^{+2}\,dt
\le C_u .
\end{equation}
Moreover, \(C_u\) may be chosen such that
\[
C_u
\le
C\left(
1+|x|^2+\mathbb E\int_0^T |u_t|^2\,dt
\right),
\]
where \(C>0\) does not depend on \(n\).
\end{lemma}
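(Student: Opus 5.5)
The plan is to follow the classical argument of El Karoui et al. and Essaky--Ouknine--Harraj, which relies on the regularity of the obstacle in \textbf{(A5)}. Set $S_t:=h(t,X^u_t)$ with $dS_t=\alpha_t\,dt+\beta_t\,dW_t$, and consider the difference $\Delta_t:=S_t-Y^{n,u}_t$. From \eqref{compact-penalized-system} we have
\[
d\Delta_t=\Big(\alpha_t+f(t,X_t,Y^{n}_t,Z^{n}_t,u_t)+n\Delta_t^+\Big)dt+(\beta_t-Z^{n}_t)\,dW_t .
\]
At the terminal time, $\Delta_T^+=0$ because $h(T,X_T^u)\le\Phi(X_T^u)$ by \textbf{(A4)}.

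First, I apply It\^o's formula (Tanaka / Meyer--It\^o for the convex function $x\mapsto (x^+)^2$, which is $C^1$ with Lipschitz derivative) to $(\Delta_t^+)^2$ on $[t,T]$. This yields
\[
(\Delta_t^+)^2+2n\int_t^T(\Delta_s^+)^2ds+\int_t^T\mathbf 1_{\{\Delta_s>0\}}|\beta_s-Z^n_s|^2ds
=-2\int_t^T\Delta_s^+\big(\alpha_s+f(s,X_s,Y^n_s,Z^n_s,u_s)\big)ds-M_t ,
\]
where $M$ is a stochastic integral. On $\{\Delta_s>0\}$ I use Young's inequality in the form $2\Delta^+_s|g_s|\le n(\Delta_s^+)^2+|g_s|^2/n$, with $g_s=\alpha_s+f(\cdot)$. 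Taking expectations, after a localization argument to kill $M$, gives
\[
n\,\mathbb E\int_0^T(\Delta_s^+)^2ds\le \frac1n\,\mathbb E\int_0^T|\alpha_s+f(s,X_s,Y^n_s,Z^n_s,u_s)|^2ds .
\]
Multiplying by $n$ produces the desired quantity $n^2\mathbb E\int(\Delta^+)^2$ on the left, but the right-hand side still contains $f$ evaluated at $(Y^n,Z^n)$.

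To close the estimate, I would use a finer version of this idea rather than a crude moment bound on $(Y^n,Z^n)$. On $\{\Delta>0\}$ we have $Y^n<S$, so $|f(s,X,Y^n,Z^n,u)|\le |f(s,X,S,\beta,u)|+C\Delta^+ +C|Z^n-\beta|$ by \textbf{(A2)}. The $\Delta^+$ term is absorbed by the left-hand side for $n$ large. The $|Z^n-\beta|\mathbf 1_{\{\Delta>0\}}$ term is controlled by the gradient term already present on the left after Young's inequality with a small constant. One then obtains
\[
n^2\,\mathbb E\int_0^T(\Delta_s^+)^2ds\le C\,\mathbb E\int_0^T\big(|\alpha_s|^2+|\beta_s|^2+|f(s,X_s,S_s,\beta_s,u_s)|^2\big)ds ,
\]
with $C$ depending only on the Lipschitz constants. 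This is the sharpest route.

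If the direct absorption is delicate, I would fall back on the standard comparison/Girsanov argument of Essaky--Ouknine--Harraj. Linearize $f$ in $(y,z)$ along the difference, change to an equivalent measure that removes the $z$-dependence, and represent $\Delta^+$ through the bound $\Delta_t^+\le \mathbb E^{Q}[\int_t^T e^{-n(s-t)}|g_s|ds\mid\mathcal F_t]$. Young's inequality for convolutions then gives an $L^2$ bound of order $1/n$ for $\Delta^+$. The Girsanov density has moments controlled by the bound on $f_z$, and the $1/n$ factor is exactly what makes $n^2$ times the squared norm bounded. I regard this absorption of the $Z^n$ contribution as the main obstacle, and I would try the It\^o/absorption route first, since it avoids change of measure.

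Finally, the right-hand side is bounded uniformly in $n$ using the growth conditions:
\[
|f(s,X,S,\beta,u)|\le C(1+|X_s|+|h(s,X_s)|+|\beta_s|+|u_s|).
\]
Here $|h(s,x)|\le C(1+|x|)$ because $h_x$ is bounded and $h(\cdot,0)$ is continuous on $[0,T]$. Combining this with the standard SDE estimate $\mathbb E\sup_t|X^u_t|^2\le C(1+|x|^2+\mathbb E\int_0^T|u_t|^2dt)$, which follows from \textbf{(A1)} and Gronwall's lemma, and with the uniform bound on $\alpha^u,\beta^u$ in \textbf{(A5)}, yields $C_u\le C(1+|x|^2+\mathbb E\int_0^T|u_t|^2dt)$ with $C$ independent of $n$. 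Small values of $n$, where the absorption requires $n\ge n_0$, are handled by standard BSDE a priori bounds, which are of the same form since the penalized generator is Lipschitz with constant $n\le n_0$.
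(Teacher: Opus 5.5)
Your proof is correct, but it takes a different route from the paper. The paper does no computation here. It flips signs ($\bar Y^{n,u}=-Y^{n,u}$, $\bar U^u=-S^u$) to recast the lower-barrier penalization as the upper-barrier scheme of Essaky--Ouknine--Harraj. It then imports their penalization estimate for regular barriers via \textbf{(A5)} and asserts the form of $C_u$ from the growth conditions.

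You prove the estimate directly, by It\^o's formula for $(\Delta^+)^2$ with $\Delta_t=h(t,X^u_t)-Y^{n,u}_t$, using \textbf{(A4)} to get $\Delta_T^+=0$. The key step is freezing $f$ at $(S_t,\beta_t)$ on $\{\Delta_t>0\}$. The $Y^{n,u}$-dependence is then absorbed by $2n\int(\Delta^+)^2ds$, and the $Z^{n,u}$-dependence by $\int\mathbf 1_{\{\Delta>0\}}|\beta-Z^{n,u}|^2ds$.

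The bookkeeping closes:
\begin{itemize}
\item With $L$ the Lipschitz constant of $f$ and $n\ge 4L+4L^2$, one gets $n^2\,\mathbb E\int_0^T(\Delta_s^+)^2ds\le 2\,\mathbb E\int_0^T\big(|\alpha_s|+|f(s,X_s,S_s,\beta_s,u_s)|\big)^2ds$.
\item For fixed $n$ the stochastic integral is a true martingale, since $\Delta^+\in\mathcal S^2$ and $\beta-Z^{n,u}\in\mathcal H^2$.
\item The finitely many $n<4L+4L^2$ are covered by the fixed-$n$ Lipschitz a priori bound, as you say.
\end{itemize}
So the Girsanov fallback is unnecessary.

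Your route buys several things. It is self-contained and uses exactly \textbf{(A2)}, \textbf{(A4)}, \textbf{(A5)} and the forward moment bound, with no sign change. It derives, rather than asserts, the dependence $C_u\le C(1+|x|^2+\mathbb E\int_0^T|u_t|^2dt)$. It also needs no uniform-in-$n$ bound on $(Y^{n,u},Z^{n,u})$. That is convenient here, because the paper proves Proposition~\ref{prop-uniform-estimate} with the help of this lemma. As a by-product you also get $\mathbb E\int_0^T\mathbf 1_{\{\Delta_s>0\}}|Z^{n,u}_s-\beta_s|^2ds=O(1/n)$. The paper's route is shorter, at the price of outsourcing the whole computation to the cited reference.
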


\begin{proof}
Set
\[
S_t^u:=h(t,X_t^u),\qquad 0\le t\le T.
\]
The penalized BSDE reads
\[
Y_t^{n,u}
=
\Phi(X_T^u)
+
\int_t^T
\left[
f(s,X_s^u,Y_s^{n,u},Z_s^{n,u},u_s)
+
n(S_s^u-Y_s^{n,u})^+
\right]ds
-
\int_t^T Z_s^{n,u}\,dW_s .
\]

We reduce this lower-barrier penalization to an upper-barrier penalization
by the change of variables
\[
\bar Y_t^{n,u}:=-Y_t^{n,u},
\qquad
\bar Z_t^{n,u}:=-Z_t^{n,u},
\qquad
\bar U_t^u:=-S_t^u,
\qquad
\bar \xi^u:=-\Phi(X_T^u).
\]
Then
\[
(S_t^u-Y_t^{n,u})^+
=
(\bar Y_t^{n,u}-\bar U_t^u)^+ .
\]
The process \((\bar Y^{n,u},\bar Z^{n,u})\) satisfies
\[
\bar Y_t^{n,u}
=
\bar \xi^u
+
\int_t^T
\left[
\bar f(s,\bar Y_s^{n,u},\bar Z_s^{n,u},u_s)
-
n(\bar Y_s^{n,u}-\bar U_s^u)^+
\right]ds
-
\int_t^T \bar Z_s^{n,u}\,dW_s ,
\]
where
\[
\bar f(s,\bar y,\bar z,u_s)
:=
-
f(s,X_s^u,-\bar y,-\bar z,u_s).
\]
Thus, after the above change of sign, the penalization term has exactly the
same upper-barrier form as in the penalization scheme used by
\cite{EssakyOuknineHarraj2005}.

By \textbf{(A5)}, the obstacle \(S^u\) admits the Itô decomposition
\[
dS_t^u=\alpha_t^u\,dt+\beta_t^u\,dW_t,
\]
with square-integrable coefficients. Hence \(\bar U^u=-S^u\) has the same
regularity. Therefore, the penalization estimate of
\cite{EssakyOuknineHarraj2005} applies to the transformed equation and yields
\[
\mathbb E\int_0^T
n^2(\bar Y_t^{n,u}-\bar U_t^u)^{+2}\,dt
\le C_u ,
\]
where \(C_u\) is independent of \(n\).

Since
\[
(\bar Y_t^{n,u}-\bar U_t^u)^+
=
(S_t^u-Y_t^{n,u})^+
=
\big(h(t,X_t^u)-Y_t^{n,u}\big)^+,
\]
we obtain
\[
\mathbb E\int_0^T
n^2\big(h(t,X_t^u)-Y_t^{n,u}\big)^{+2}\,dt
\le C_u .
\]
Finally, the growth assumptions on \(\Phi\), \(f\), and \(h\), together with
the estimate for \(X^u\) and the regularity bound in \textbf{(A5)}, imply
\[
C_u
\le
C\left(
1+|x|^2+\mathbb E\int_0^T |u_t|^2\,dt
\right).
\]
This proves the estimate.
\end{proof}

\begin{lemma}\label{lem-vanish}
Assume \textbf{(A1)}--\textbf{(A6)}. Then, for every fixed
\(u\in\mathcal U_{ad}\),
\[
\mathbb E\int_0^T
\big(h(t,X_t^u)-Y_t^{n,u}\big)^+\,dt
\longrightarrow 0,
\qquad n\to\infty .
\]
More precisely,
\[
\mathbb E\int_0^T
\big(h(t,X_t^u)-Y_t^{n,u}\big)^+\,dt
\le
\frac{\widetilde C_u}{n},
\]
for a constant \(\widetilde C_u>0\) independent of \(n\).
\end{lemma}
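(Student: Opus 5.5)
This follows from Lemma~\ref{lem:penal-control} by Cauchy--Schwarz, with no further analysis of the penalized equation. Fix \(u\in\mathcal U_{ad}\) and set \(\Delta_t^n:=\big(h(t,X_t^u)-Y_t^{n,u}\big)^+\ge 0\). Lemma~\ref{lem:penal-control} gives
\[
\mathbb E\int_0^T (n\Delta_t^n)^2\,dt\le C_u ,
\]
with \(C_u\) independent of \(n\). I will bound the \(L^1\) norm of \(\Delta^n\) on \([0,T]\times\Omega\) by its \(L^2\) norm.

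Apply the Cauchy--Schwarz inequality on \([0,T]\times\Omega\) with respect to \(dt\otimes d\mathbb P\), pairing \(n\Delta^n\) with the constant function \(1\):
\[
n\,\mathbb E\int_0^T \Delta_t^n\,dt
=\mathbb E\int_0^T n\Delta_t^n\cdot 1\,dt
\le \Big(\mathbb E\int_0^T n^2(\Delta_t^n)^2\,dt\Big)^{1/2} T^{1/2}
\le \sqrt{T C_u}.
\]
Dividing by \(n\) yields
\[
\mathbb E\int_0^T \big(h(t,X_t^u)-Y_t^{n,u}\big)^+dt\le \frac{\widetilde C_u}{n},
\qquad \widetilde C_u:=\sqrt{TC_u}.
\]
This constant is independent of \(n\), and the convergence to \(0\) as \(n\to\infty\) follows immediately.

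By the bound on \(C_u\) in Lemma~\ref{lem:penal-control}, we also get \(\widetilde C_u\le C\big(1+|x|^2+\mathbb E\int_0^T|u_t|^2dt\big)^{1/2}\), uniformly over controls with bounded energy. This will be useful later, when controls are perturbed by spike variations.

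There is no genuine obstacle here: the only substantive input is the uniform \(n^2\)-weighted estimate, which Lemma~\ref{lem:penal-control} already supplies. The one point to check is that \(C_u\) does not depend on \(n\), so that the \(1/n\) rate is not lost.
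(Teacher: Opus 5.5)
Your proposal is correct and matches the paper's proof: both deduce the bound from the uniform estimate of Lemma~\ref{lem:penal-control} via Cauchy--Schwarz on $[0,T]\times\Omega$, obtaining $\widetilde C_u=\sqrt{T C_u}$. The only difference is cosmetic: the paper first divides by $n^2$ and then applies Cauchy--Schwarz, while you pair $n\Delta^n$ with the constant $1$ directly.
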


\begin{proof}
From Lemma \ref{lem:penal-control}, we have
\[
\mathbb E\int_0^T
n^2\big(h(t,X_t^u)-Y_t^{n,u}\big)^{+2}\,dt
\le C_u .
\]
Hence
\[
\mathbb E\int_0^T
\big(h(t,X_t^u)-Y_t^{n,u}\big)^{+2}\,dt
\le
\frac{C_u}{n^2}.
\]
By Cauchy's inequality,
\[
\mathbb E\int_0^T
\big(h(t,X_t^u)-Y_t^{n,u}\big)^+\,dt
\le
T^{1/2}
\left(
\mathbb E\int_0^T
\big(h(t,X_t^u)-Y_t^{n,u}\big)^{+2}\,dt
\right)^{1/2}.
\]
Therefore,
\[
\mathbb E\int_0^T
\big(h(t,X_t^u)-Y_t^{n,u}\big)^+\,dt
\le
\frac{T^{1/2}C_u^{1/2}}{n}.
\]
Letting \(n\to\infty\) proves the result.
\end{proof}

\begin{proposition}[Well-posedness and uniform estimates]\label{prop-uniform-estimate}
Assume \textbf{(A1)}--\textbf{(A6)}. Then, for every
\(n\ge1\) and every \(u\in\mathcal U_{ad}\), the penalized system
\eqref{penalized-system} admits a unique solution
\[
(X^u,Y^{n,u},Z^{n,u},K^{n,u})
\in
\mathcal S^2(\mathbb R^n)
\times
\mathcal S^2(\mathbb R)
\times
\mathcal H^2(\mathbb R^d)
\times
\mathcal S^2(\mathbb R).
\]
Moreover, there exists a constant \(C>0\), independent of \(n\), such that
\begin{equation}\label{uniform-estimate-penalized}
\mathbb E\left[
\sup_{0\le t\le T}|Y_t^{n,u}|^2
+
\int_0^T |Z_t^{n,u}|^2\,dt
+
\big(K_T^{n,u}\big)^2
\right]
\le
C\left(
1+|x|^2+\mathbb E\int_0^T |u_t|^2\,dt
\right).
\end{equation}
\end{proposition}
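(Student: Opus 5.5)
We prove the two parts separately: well-posedness comes from classical Lipschitz SDE/BSDE theory, and the uniform estimate comes from Itô's formula combined with Lemma \ref{lem:penal-control}.

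\emph{Well-posedness.} Under \textbf{(A1)}, the forward SDE has Lipschitz coefficients with linear growth. It therefore has a unique strong solution, and the standard Gronwall/BDG argument gives
\[
\mathbb E\sup_{t\le T}|X_t^u|^2\le C\Big(1+|x|^2+\mathbb E\int_0^T|u_t|^2dt\Big),
\]
where \(C\) depends only on \(T\) and the constants in \textbf{(A1)}.

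For fixed \(n\), the map \(y\mapsto n(h(t,x)-y)^+\) is Lipschitz with constant \(n\). Hence \(f^n\) in \eqref{penalized-generator} is Lipschitz in \((y,z)\), with constant \(C+n\). Moreover, by \textbf{(A2)}, \textbf{(A4)} and the linear growth of \(h\) (from \(h_x\) bounded and continuity in \(t\)), the process \(f^n(s,X_s^u,0,0,u_s)\) belongs to \(\mathcal H^2\). The terminal value \(\Phi(X_T^u)\) lies in \(L^2\) by \textbf{(A3)}.

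The Pardoux--Peng theorem then gives a unique solution \((Y^{n,u},Z^{n,u})\in\mathcal S^2\times\mathcal H^2\) of \eqref{compact-penalized-system}. The process \(K^{n,u}\) in \eqref{penalization-process} is continuous and nondecreasing. It belongs to \(\mathcal S^2\) since \((K_T^{n,u})^2\le T\int_0^T n^2((h-Y^{n,u})^+)^2dt\) is integrable.

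\emph{Uniform bound on \(K\).} This is the step where a naive Itô estimate fails, because it produces terms with constant \(n\). We avoid this by using Lemma \ref{lem:penal-control} directly. By Cauchy--Schwarz,
\[
\mathbb E(K_T^{n,u})^2\le T\,\mathbb E\int_0^T n^2\big(h(t,X_t^u)-Y_t^{n,u}\big)^{+2}dt\le T C\Big(1+|x|^2+\mathbb E\int_0^T|u_t|^2dt\Big),
\]
and this holds uniformly in \(n\).

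\emph{Bounds on \(Y\) and \(Z\).} Apply Itô's formula to \(|Y_t^{n,u}|^2\) on \([t,T]\):
\[
|Y_t^{n,u}|^2+\int_t^T|Z_s^{n,u}|^2ds=|\Phi(X_T^u)|^2+2\int_t^T Y_s^{n,u}f(s,\cdot)ds+2\int_t^T Y_s^{n,u}dK_s^{n,u}-2\int_t^T Y_s^{n,u}Z_s^{n,u}dW_s.
\]
We treat each term on the right as follows.

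1. For the generator term, the Lipschitz and growth bounds of \textbf{(A2)} and Young's inequality give
\[
2Yf\le C|Y|^2+\tfrac12|Z|^2+C\big(1+|X|^2+|u|^2\big).
\]
2. For the reflection term, bound it by
\[
2\int_t^T Y_s^{n,u}dK_s^{n,u}\le 2\sup_s|Y_s^{n,u}|\,K_T^{n,u}\le \varepsilon\sup_s|Y_s^{n,u}|^2+\varepsilon^{-1}(K_T^{n,u})^2,
\]
and use the uniform control of \(\mathbb E(K_T^{n,u})^2\) obtained above.

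Taking expectations and applying Gronwall's inequality yields uniform bounds on \(\sup_t\mathbb E|Y_t^{n,u}|^2\) and on \(\mathbb E\int_0^T|Z_t^{n,u}|^2dt\).

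To move the supremum inside the expectation, return to the Itô identity, take the supremum over \(t\), and apply the BDG inequality to the stochastic integral:
\[
\mathbb E\sup_t\Big|\int_t^T YZ\,dW\Big|\le \tfrac14\mathbb E\sup_t|Y_t|^2+C\,\mathbb E\int_0^T|Z_t|^2dt.
\]
Choosing \(\varepsilon\) small, we absorb the terms \(\varepsilon\,\mathbb E\sup_t|Y_t|^2\) and \(\tfrac14\mathbb E\sup_t|Y_t|^2\) into the left-hand side. This is legitimate because \(Y^{n,u}\in\mathcal S^2\) for each fixed \(n\), so these quantities are finite.

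Finally, insert the moment bound on \(X^u\) and the bound \(C_u\le C(1+|x|^2+\mathbb E\int_0^T|u_t|^2dt)\) from Lemma \ref{lem:penal-control}. This gives \eqref{uniform-estimate-penalized} with a constant \(C\) that does not depend on \(n\).
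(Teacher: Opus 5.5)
Your proof is correct, but it reaches the uniform estimate by a different route from the paper's.

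\emph{Comparison of routes.} The paper handles well-posedness as you do, via classical Lipschitz SDE/BSDE theory for fixed $n$. For the $n$-uniform bound, however, it simply invokes the a priori estimates of the Essaky--Ouknine--Harraj penalization scheme for the sign-changed upper-barrier equation. This gives a bound by a quantity $\Gamma_u$ involving $\mathbb E\sup_t|S_t^u|^2$ and the $L^2$ norms of the It\^o coefficients $\alpha^u,\beta^u$ from \textbf{(A5)}, which the paper then controls by the growth assumptions.

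You instead use Lemma \ref{lem:penal-control} as the only $n$-uniform input. Cauchy--Schwarz turns it into the bound on $\mathbb E(K_T^{n,u})^2$, and an elementary It\^o--Gronwall--BDG computation then yields the $Y$ and $Z$ bounds. Your route makes it transparent why the constant does not depend on $n$: all $n$-dependence is funneled through the $L^2$ bound on the penalization density, and \textbf{(A5)} enters only through that lemma. The argument is self-contained apart from Lemma \ref{lem:penal-control}. The paper's proof is shorter but delegates the whole estimate to the reference.

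\emph{A minor presentational point.} In your Gronwall step the right-hand side still contains $\varepsilon\,\mathbb E\sup_s|Y_s^{n,u}|^2$, so you do not yet have a closed bound on $\sup_t\mathbb E|Y_t^{n,u}|^2$. The argument closes only after the BDG step, using that the Gronwall constant is independent of $\varepsilon$. You can avoid this bookkeeping by estimating the reflection term pointwise:
$2Y_s^{n,u}\,n\big(h(s,X_s^u)-Y_s^{n,u}\big)^+\le |Y_s^{n,u}|^2+n^2\big(h(s,X_s^u)-Y_s^{n,u}\big)^{+2}$.
By Lemma \ref{lem:penal-control} this is directly compatible with Gronwall.
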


\begin{proof}
For each \(u\in\mathcal U_{ad}\), assumption \textbf{(A1)} implies that the
forward SDE admits a unique strong solution
\(X^u\in\mathcal S^2(\mathbb R^n)\). Moreover,
\[
\mathbb E\left[
\sup_{0\le t\le T}|X_t^u|^2
\right]
\le
C\left(
1+|x|^2+\mathbb E\int_0^T |u_t|^2\,dt
\right).
\]

For fixed \(n\), the penalized generator
\[
f^n(t,x,y,z,u)
=
f(t,x,y,z,u)+n\big(h(t,x)-y\big)^+
\]
is progressively measurable and Lipschitz in \((y,z)\). Hence, by the
classical well-posedness theorem for Lipschitz BSDEs, there exists a unique
pair
\[
(Y^{n,u},Z^{n,u})
\in
\mathcal S^2(\mathbb R)\times\mathcal H^2(\mathbb R^d).
\]
We then define
\[
K_t^{n,u}
=
\int_0^t
n\big(h(s,X_s^u)-Y_s^{n,u}\big)^+\,ds .
\]

It remains to justify the estimate uniformly in \(n\). Set
\[
S_t^u:=h(t,X_t^u),\qquad 0\le t\le T.
\]
By \textbf{(A5)}, \(S^u\) is a regular lower obstacle and admits the Itô
decomposition
\[
dS_t^u=\alpha_t^u\,dt+\beta_t^u\,dW_t,
\]
with
\[
\mathbb E\int_0^T|\alpha_t^u|^2\,dt
+
\mathbb E\int_0^T|\beta_t^u|^2\,dt
\le
C\left(
1+|x|^2+\mathbb E\int_0^T|u_t|^2\,dt
\right).
\]

The penalization estimate of Lemma \ref{lem:penal-control}, which is obtained
by adapting the penalization argument of \cite{EssakyOuknineHarraj2005} through the
change of sign
\[
\bar Y^{n,u}=-Y^{n,u},
\qquad
\bar U^u=-S^u,
\]
gives
\[
\mathbb E\int_0^T
n^2\big(S_t^u-Y_t^{n,u}\big)^{+2}\,dt
\le
C\left(
1+|x|^2+\mathbb E\int_0^T |u_t|^2\,dt
\right).
\]

Moreover, applying the a priori estimates of the \cite{EssakyOuknineHarraj2005}
penalization scheme to the transformed upper-barrier equation yields
\[
\mathbb E\left[
\sup_{0\le t\le T}|Y_t^{n,u}|^2
+
\int_0^T |Z_t^{n,u}|^2\,dt
+
\big(K_T^{n,u}\big)^2
\right]
\le
C\Gamma_u,
\]
where
\[
\Gamma_u
:=
\mathbb E|\Phi(X_T^u)|^2
+
\mathbb E\int_0^T |f(t,X_t^u,0,0,u_t)|^2\,dt
+
\mathbb E\sup_{0\le t\le T}|S_t^u|^2
+
\mathbb E\int_0^T|\alpha_t^u|^2\,dt
+
\mathbb E\int_0^T|\beta_t^u|^2\,dt .
\]
Using the growth assumptions on \(\Phi\), \(f\), and \(h\), together with the
standard estimate for \(X^u\) and the regularity bound in \textbf{(A5)}, we
obtain
\[
\Gamma_u
\le
C\left(
1+|x|^2+\mathbb E\int_0^T |u_t|^2\,dt
\right).
\]
Consequently,
\[
\mathbb E\left[
\sup_{0\le t\le T}|Y_t^{n,u}|^2
+
\int_0^T |Z_t^{n,u}|^2\,dt
+
\big(K_T^{n,u}\big)^2
\right]
\le
C\left(
1+|x|^2+\mathbb E\int_0^T |u_t|^2\,dt
\right),
\]
where \(C>0\) is independent of \(n\). This completes the proof.
\end{proof}

\begin{proposition}\label{prop-penalized-convergence}
Assume \textbf{(A1)}--\textbf{(A6)}.
For every fixed control \(u\in\mathcal U_{ad}\), the sequence
\((Y^{n,u},Z^{n,u},K^{n,u})_{n\ge1}\) converges, as \(n\to\infty\), to the unique reflected solution
\((Y^u,Z^u,K^u)\) of \eqref{state-system}, in the sense that
\[
\mathbb E\Big[\sup_{0\le t\le T}|Y_t^{n,u}-Y_t^u|^2\Big]\to0,
\qquad
\mathbb E\int_0^T |Z_t^{n,u}-Z_t^u|^2\,dt\to0,
\]
and
\[
\mathbb E\Big[\sup_{0\le t\le T}|K_t^{n,u}-K_t^u|^2\Big]\to0.
\]
\end{proposition}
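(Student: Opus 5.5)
The plan is to follow the classical penalization argument of El Karoui et al.\ \cite{ElKarouiKapoudjianPardouxPengQuenez1997}, adapted as in \cite{EssakyOuknineHarraj2005}. The uniform bounds of Proposition \ref{prop-uniform-estimate} and the barrier-violation bounds of Lemmas \ref{lem:penal-control}--\ref{lem-vanish} serve as a priori input. Fix \(u\) and write \(S_t=h(t,X_t^u)\), \(\xi=\Phi(X_T^u)\), \(f_s^n:=f(s,X_s^u,Y_s^{n,u},Z_s^{n,u},u_s)\). Since \(X^u\) does not depend on \(n\), the problem reduces to penalization for an RBSDE with fixed data \((\xi,f(\cdot,X^u,\cdot,\cdot,u),S)\).

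\textbf{Step 1 (Cauchy property).} I would apply It\^o's formula to \(|Y_t^{n}-Y_t^{p}|^2\), writing \(Y^n=Y^{n,u}\) and similarly for \(Z^n,K^n\). This gives
\[
|Y_t^n-Y_t^p|^2+\int_t^T|Z_s^n-Z_s^p|^2ds = 2\int_t^T(Y^n_s-Y^p_s)(f^n_s-f^p_s)ds+2\int_t^T(Y^n_s-Y^p_s)(dK^n_s-dK^p_s)-M_t,
\]
where \(M\) is a martingale. The Lipschitz part of \(f\) is absorbed by Young's inequality and Gronwall's lemma. For the reflection part, the key observation is that \((Y^n_s-Y^p_s)\,dK^n_s\le (S_s-Y^p_s)^+\,dK^n_s\), because \(dK^n\) charges only \(\{Y^n<S\}\). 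Symmetrically, \(-(Y^n_s-Y^p_s)\,dK^p_s\le (S_s-Y^n_s)^+dK^p_s\). Hence
\[
\mathbb E\int_0^T (Y^n-Y^p)\,d(K^n-K^p)\le \mathbb E\int_0^T (S-Y^p)^+\,n(S-Y^n)^+ds+\mathbb E\int_0^T (S-Y^n)^+\,p(S-Y^p)^+ds.
\]
By Cauchy--Schwarz and Lemma \ref{lem:penal-control}, the first term is at most \((C_u/p^2)^{1/2}C_u^{1/2}=C_u/p\), and the second is at most \(C_u/n\). This is precisely where the sharp \(L^2\) bound \(n^2\mathbb E\int((S-Y^n)^+)^2\le C_u\) is needed, which in turn uses the regular-obstacle assumption (A5). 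Combining this with the Burkholder--Davis--Gundy inequality to treat \(\sup_t|M_t|\), I obtain
\[
\mathbb E\Big[\sup_t|Y^n_t-Y^p_t|^2+\int_0^T|Z^n_s-Z^p_s|^2ds\Big]\le C\Big(\frac1n+\frac1p\Big).
\]
So \((Y^n,Z^n)\) is Cauchy in \(\mathcal S^2\times\mathcal H^2\), with limit \((Y,Z)\).

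\textbf{Step 2 (convergence of \(K^n\)).} The penalized equation gives the representation
\[
K^n_t=Y^n_0-Y^n_t-\int_0^t f^n_sds+\int_0^tZ^n_sdW_s.
\]
By the Lipschitz property of \(f\), Step 1 and the BDG inequality, \(K^n\) is Cauchy in \(\mathcal S^2\), with limit \(K\). The limit \(K\) is continuous, nondecreasing, adapted, and satisfies \(K_0=0\). Passing to the limit in the equation shows that \((Y,Z,K)\) solves the BSDE part of \eqref{state-system}.

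\textbf{Step 3 (identification as the reflected solution).} Lemma \ref{lem-vanish} together with Fatou's lemma gives \(\mathbb E\int_0^T(S_t-Y_t)^+dt=0\). By continuity of \(Y\) and \(S\), this yields \(Y_t\ge S_t\) for all \(t\), almost surely. For the Skorokhod condition, I would write
\[
\int_0^T (Y_t-S_t)\,dK_t=\lim_n\int_0^T (Y^n_t-S_t)\,dK^n_t,
\]
using uniform convergence of \(Y^n\) and weak convergence of the measures \(dK^n\to dK\), which holds along a subsequence since \(\sup_t|K^n_t-K_t|\to0\). The right-hand side equals \(-\int_0^T n\big((S_t-Y^n_t)^+\big)^2dt\le0\), while the left-hand side is \(\ge0\) because \(Y\ge S\). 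Hence \(\int_0^T(Y_t-S_t)\,dK_t=0\). Uniqueness of the reflected solution then identifies \((Y,Z,K)=(Y^u,Z^u,K^u)\), and the whole sequence converges, not just a subsequence.

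\textbf{Main obstacle.} I expect the main difficulty to be the cross term in Step 1. Its control requires the rate \(1/n\) in \(L^2\) from Lemma \ref{lem:penal-control}, not merely the \(L^1\) vanishing of Lemma \ref{lem-vanish}. If only weaker estimates were available, I would instead use the comparison theorem (\(Y^n\) is nondecreasing in \(n\)) together with the Dini/Peng monotone-limit argument.
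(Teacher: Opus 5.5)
Your proof is correct, but it obtains the convergence of $(Y^{n,u},Z^{n,u})$ by a different mechanism from the paper's.

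\textbf{The paper's route.} It first uses the comparison theorem ($f^n\le f^{n+1}$) to get $Y^{n,u}\uparrow Y^u$. It then invokes a ``standard monotone stability argument'' for $\mathcal S^2$ convergence of $Y$. Only after that does it apply It\^o's formula to $|Y^n-Y^m|^2$, in order to deduce that $(Z^n)$ is Cauchy; there the penalization cross term is bounded by an unspecified $C\,\mathbb E\int|\Delta Y|^2ds+\varepsilon_{n,m}$.

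\textbf{Your route.} You dispense with comparison entirely. The sign observation that $dK^n$ charges only $\{Y^n<S\}$, together with Cauchy--Schwarz and the sharp bound of Lemma~\ref{lem:penal-control}, controls the cross term by $C_u(1/n+1/p)$. After Gronwall and BDG, $Y$ (in $\mathcal S^2$) and $Z$ (in $\mathcal H^2$) are Cauchy simultaneously, with an explicit $O(1/n)$ rate that also carries over to $K$.

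\textbf{What your route buys.} It makes rigorous two points the paper leaves to ``standard'' arguments.
\begin{itemize}
\item Monotonicity plus boundedness in $\mathcal S^2$ does not by itself give the $\mathcal S^2$ Cauchy property. One needs continuity of the limit and a Dini-type argument, or a direct estimate like yours.
\item The Skorokhod condition, which you verify explicitly through $\int_0^T(Y^n_t-S_t)\,dK^n_t=-n\int_0^T((S_t-Y^n_t)^+)^2dt\le0$ and the weak convergence of $dK^n$ along an a.s.\ uniformly convergent subsequence.
\end{itemize}

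\textbf{What it costs.} Your argument depends essentially on the $L^2$ rate of Lemma~\ref{lem:penal-control}, hence on the regular-obstacle hypothesis (A5). The comparison/monotone route is the one that extends to merely continuous obstacles: there, monotonicity plus Dini yields sup-norm vanishing of the violation, which replaces the rate in the cross-term estimate, as in El Karoui et al. Your fallback remark already anticipates this, and (A5) is assumed here anyway.

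The convergence of $K^{n,u}$ via the BSDE representation and the identification of the limit through uniqueness of the reflected solution coincide with the paper's proof.
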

\begin{proof}
From Proposition \ref{prop-uniform-estimate}, we have
\[
\sup_{n\ge1}\mathbb E\Big[
\sup_{0\le t\le T}|Y_t^{n,u}|^2
+\int_0^T |Z_t^{n,u}|^2 dt
+|K_T^{n,u}|^2
\Big] < \infty,
\]
which implies that $(Y^{n,u})$ is bounded in $\mathcal S^2$ and $(Z^{n,u})$ is bounded in $\mathcal H^2$.

Since the penalized generators satisfy
\[
f^n(t,x,y,z,u)\le f^{n+1}(t,x,y,z,u),
\quad \forall (t,x,y,z,u),
\]
the comparison theorem for BSDEs yields
\[
Y_t^{n,u}\le Y_t^{n+1,u}, \quad \forall t\in[0,T],
\]
with the same terminal condition.
Consequently, $(Y^{n,u})$ is nondecreasing and uniformly bounded in $\mathcal S^2$, hence there
exists a process $Y^u$ such that
\[
Y_t^{n,u}\uparrow Y_t^u \quad \text{a.s. for all } t\in[0,T],
\]
using the standard monotone stability argument for penalized BSDEs, we obtain
\[
\mathbb E\Big[\sup_{0\le t\le T}|Y_t^{n,u}-Y_t^u|^2\Big]\longrightarrow 0.
\]

Let $\Delta Y = Y^{n,u}-Y^{m,u}$ and $\Delta Z = Z^{n,u}-Z^{m,u}$, for $n>m$. Applying Itô's
formula to $|\Delta Y_t|^2$ gives
\begin{align*}
|\Delta Y_t|^2 + \int_t^T |\Delta Z_s|^2 ds
&= 2\int_t^T \Delta Y_s \big(f(s,X_s^u,Y_s^{n,u},Z_s^{n,u},u_s)-f(s,X_s^u,Y_s^{m,u},Z_s^{m,u},u_s)\big)\,ds \\
&\quad + 2\int_t^T \Delta Y_s\, \Big[n(h(s,X_s^u)-Y_s^{n,u})^+ - m(h(s,X_s^u)-Y_s^{m,u})^+\Big] ds \\
&\quad - 2\int_t^T \Delta Y_s \Delta Z_s\, dW_s.
\end{align*}
Taking expectations removes the stochastic integral term. Using Lipschitz continuity of $f$,
\[
2\Delta Y_s\Big(
f(s,X_s^{u},Y_s^{n,u},Z_s^{n,u},u_s)
-
f(s,X_s^{u},Y_s^{m,u},Z_s^{m,u},u_s)
\Big)
\le
C|\Delta Y_s|^2+\frac{1}{2}|\Delta Z_s|^2 .
\]

For the penalization term, using the monotonicity $Y_t^{n,u}\le Y_t^{n+1,u}$ established above
together with the Lipschitz property of $x\mapsto x^+$, one shows (as in the proof of
Lemma \ref{lem-vanish}) that
\[
2\,\mathbb E\int_t^T \Delta Y_s\, \Big[n(h-Y^{n,u})^+ - m(h-Y^{m,u})^+\Big] ds
\le C\,\mathbb E\int_t^T |\Delta Y_s|^2 ds + \varepsilon_{n,m},
\]
where $\varepsilon_{n,m}\to0$ as $n,m\to\infty$, since
$\mathbb E\int_0^T (h(t,X_t^u)-Y_t^{n,u})^{+2}dt\to0$ by Lemma \ref{lem-vanish} together with
Lemma \ref{lem:penal-control}. Combining the above estimates and applying Gronwall's lemma yields
\[
\mathbb E\int_0^T |\Delta Z_s|^2 ds
\le C\, \mathbb E\int_0^T |\Delta Y_s|^2 ds + \varepsilon_{n,m}.
\]
Since $(Y^{n,u})$ is Cauchy in $\mathcal S^2$ (being monotone and bounded), it follows that
$(Z^{n,u})$ is Cauchy in $\mathcal H^2$, and there exists $Z^u\in\mathcal H^2$ such that
\[
Z^{n,u}\to Z^u \quad \text{in } \mathcal H^2.
\]

From the BSDE representation of \(K^{n,u}\),
\[
K_t^{n,u}
=
Y_t^{n,u}-Y_0^{n,u}
-\int_0^t f(s,X_s^u,Y_s^{n,u},Z_s^{n,u},u_s)\,ds
+\int_0^t Z_s^{n,u}\,dW_s,
\]
we now prove the convergence of \(K^{n,u}\) in \(\mathcal S^2\).

For \(n,m\ge1\), we have
\[
\begin{aligned}
K_t^{n,u}-K_t^{m,u}
={}&
\big(Y_t^{n,u}-Y_t^{m,u}\big)
-
\big(Y_0^{n,u}-Y_0^{m,u}\big)
\\
&-
\int_0^t
\Big[
f(s,X_s^u,Y_s^{n,u},Z_s^{n,u},u_s)
-
f(s,X_s^u,Y_s^{m,u},Z_s^{m,u},u_s)
\Big]\,ds
\\
&+
\int_0^t
\big(Z_s^{n,u}-Z_s^{m,u}\big)\,dW_s .
\end{aligned}
\]
Using the Lipschitz continuity of \(f\), the Burkholder--Davis--Gundy
inequality, and Young's inequality, we obtain
\[
\begin{aligned}
\mathbb E\left[
\sup_{0\le t\le T}|K_t^{n,u}-K_t^{m,u}|^2
\right]
\le C\,
\mathbb E\left[
\sup_{0\le t\le T}|Y_t^{n,u}-Y_t^{m,u}|^2
\right]
+
C\,\mathbb E\int_0^T
|Z_t^{n,u}-Z_t^{m,u}|^2\,dt .
\end{aligned}
\]
Since \(Y^{n,u}\) is Cauchy in \(\mathcal S^2(\mathbb R)\) and
\(Z^{n,u}\) is Cauchy in \(\mathcal H^2(\mathbb R^d)\), it follows that
\((K^{n,u})_{n\ge1}\) is Cauchy in \(\mathcal S^2(\mathbb R)\). Hence there
exists a process \(K^u\in\mathcal S^2(\mathbb R)\) such that
\[
\mathbb E\left[
\sup_{0\le t\le T}|K_t^{n,u}-K_t^u|^2
\right]\longrightarrow0.
\]
We stress that this convergence is obtained in \(\mathcal S^2\). We do not
claim any pathwise monotonicity of \(K^{n,u}\) with respect to \(n\).

Since \(K^{n,u}\to K^u\) in \(\mathcal S^2(\mathbb R)\), there exists a
subsequence, still denoted by \((K^{n,u})\), which converges uniformly on
\([0,T]\), \(\mathbb P\)-a.s., to \(K^u\). Since each \(K^{n,u}\) is continuous,
nondecreasing in \(t\), adapted, and satisfies \(K_0^{n,u}=0\), the limit
\(K^u\) admits a continuous, nondecreasing, adapted modification with
\(K_0^u=0\).

Passing to the limit in the penalized equation yields
\[
Y_t^u
=
\Phi(X_T^u)
+\int_t^T f(s,X_s^u,Y_s^u,Z_s^u,u_s)\,ds
+K_T^u-K_t^u
-\int_t^T Z_s^u\, dW_s.
\]
The passage to the limit in the minimality condition is justified by the
standard penalization argument, as in the proof of convergence for reflected
BSDEs with regular barriers. Hence,
\[
Y_t^u \ge h(t,X_t^u), \qquad
\int_0^T (Y_t^u-h(t,X_t^u))\,dK_t^u=0.
\]

Uniqueness of $(Y^u,Z^u,K^u)$ follows from the standard uniqueness theory for one-barrier
reflected BSDEs under the Lipschitz assumptions \textbf{(A1)}--\textbf{(A6)}
(El Karoui et al., 1997), applied to two solutions obtained as limits along the penalization
scheme.
\end{proof}

\section{Exact Quotient Equations and Spike Variations}
In this section, \(u^*\in\mathcal U_{ad}\) denotes an optimal control of the
original reflected control problem. For each \(n\ge1\), we consider the
penalized state system associated with the same control \(u^*\). We denote by
\[
(X^{n,*},Y^{n,*},Z^{n,*},K^{n,*})
\]
the corresponding state trajectory, that is,
\begin{equation}\label{optimal-penalized-state}
\left\{
\begin{aligned}
dX_t^{n,*}
&=
b(t,X_t^{n,*},u_t^*)\,dt
+
\sigma(t,X_t^{n,*})\,dW_t,
\qquad X_0^{n,*}=x,
\\[1mm]
Y_t^{n,*}
&=
\Phi(X_T^{n,*})
+
\int_t^T
\Big(
f(s,X_s^{n,*},Y_s^{n,*},Z_s^{n,*},u_s^*)
+
n\big(h(s,X_s^{n,*})-Y_s^{n,*}\big)^+
\Big)\,ds
\\
&\hspace{5cm}
-
\int_t^T Z_s^{n,*}\,dW_s .
\end{aligned}
\right.
\end{equation}
The penalized increasing process is given by
\begin{equation}\label{optimal-penalized-K}
K_t^{n,*}
:=
\int_0^t
n\big(h(s,X_s^{n,*})-Y_s^{n,*}\big)^+\,ds .
\end{equation}

\subsection{Spike Variation of the Control}

Let \(\tau\in[0,T)\), let \(\delta>0\) be such that
\(\tau+\delta\le T\), and let \(v\in U\) be arbitrary. For
\(\varepsilon\in(0,\delta]\), we define the spike variation of the optimal
control \(u^*\) by
\begin{equation}\label{spike-control}
u_t^{\varepsilon}
=
\begin{cases}
v, & t\in[\tau,\tau+\varepsilon],\\
u_t^*, & t\notin[\tau,\tau+\varepsilon].
\end{cases}
\end{equation}
We denote by
\[
(X^{n,\varepsilon},Y^{n,\varepsilon},Z^{n,\varepsilon},K^{n,\varepsilon})
\]
the corresponding solution of the penalized system:
\begin{equation}\label{perturbed-penalized-state}
\left\{
\begin{aligned}
dX_t^{n,\varepsilon}
&=
b(t,X_t^{n,\varepsilon},u_t^{\varepsilon})\,dt
+
\sigma(t,X_t^{n,\varepsilon})\,dW_t,
\qquad X_0^{n,\varepsilon}=x,
\\[1mm]
Y_t^{n,\varepsilon}
&=
\Phi(X_T^{n,\varepsilon})
+
\int_t^T
\Big(
f(s,X_s^{n,\varepsilon},Y_s^{n,\varepsilon},Z_s^{n,\varepsilon},
u_s^\varepsilon)
+
n\big(h(s,X_s^{n,\varepsilon})-Y_s^{n,\varepsilon}\big)^+
\Big)\,ds
\\
&\hspace{5cm}
-
\int_t^T Z_s^{n,\varepsilon}\,dW_s .
\end{aligned}
\right.
\end{equation}
Moreover,
\begin{equation}\label{perturbed-penalized-K}
K_t^{n,\varepsilon}
:=
\int_0^t
n\big(h(s,X_s^{n,\varepsilon})-Y_s^{n,\varepsilon}\big)^+\,ds .
\end{equation}

Since \(u^*\) is optimal for the reflected problem, the spike control
\(u^\varepsilon\) satisfies
\[
J(u^\varepsilon)-J(u^*)\ge0 .
\]
Moreover, the convergence of the penalization scheme yields, for every fixed
\(\varepsilon>0\),
\[
J_n(u^\varepsilon)\longrightarrow J(u^\varepsilon),
\qquad
J_n(u^*)\longrightarrow J(u^*),
\]
and consequently,
\[
J_n(u^\varepsilon)-J_n(u^*)
\longrightarrow
J(u^\varepsilon)-J(u^*),
\qquad n\to\infty .
\]
We set
\[
\Theta_t^{n,*}
:=
(X_t^{n,*},Y_t^{n,*},Z_t^{n,*}),
\qquad
\Theta_t^{n,\varepsilon}
:=
(X_t^{n,\varepsilon},Y_t^{n,\varepsilon},
Z_t^{n,\varepsilon}).
\]

Define
\begin{equation}\label{delta-b-definition}
\Delta b_t^n(v)
:=
b(t,X_t^{n,*},v)
-
b(t,X_t^{n,*},u_t^*),
\end{equation}
and
\begin{equation}\label{delta-f-definition}
\Delta f_t^n(v)
:=
f(t,\Theta_t^{n,*},v)
-
f(t,\Theta_t^{n,*},u_t^*).
\end{equation}

The contribution of the penalization term is kept in the exact quotient
form
\begin{equation}\label{penalization-quotient}
\chi_t^{n,\varepsilon}(v,\tau)
:=
\frac{n}{\varepsilon}
\left[
\big(h(t,X_t^{n,\varepsilon})
      -Y_t^{n,\varepsilon}\big)^+
-
\big(h(t,X_t^{n,*})
      -Y_t^{n,*}\big)^+
\right].
\end{equation}

We now introduce the exact quotient processes
\begin{equation}\label{exact-quotients}
\widehat X_t^{n,\varepsilon}
:=
\frac{X_t^{n,\varepsilon}-X_t^{n,*}}{\varepsilon},
\qquad
\widehat Y_t^{n,\varepsilon}
:=
\frac{Y_t^{n,\varepsilon}-Y_t^{n,*}}{\varepsilon},
\qquad
\widehat Z_t^{n,\varepsilon}
:=
\frac{Z_t^{n,\varepsilon}-Z_t^{n,*}}{\varepsilon}.
\end{equation}

For the forward coefficients, define
\begin{equation}\label{averaged-bx}
\bar b_{x,t}^{n,\varepsilon}
:=
\int_0^1
b_x\!\left(
t,
X_t^{n,*}
+\lambda(X_t^{n,\varepsilon}-X_t^{n,*}),
u_t^\varepsilon
\right)d\lambda ,
\end{equation}
and
\begin{equation}\label{averaged-sigmax}
\bar\sigma_{x,t}^{n,\varepsilon}
:=
\int_0^1
\sigma_x\!\left(
t,
X_t^{n,*}
+\lambda(X_t^{n,\varepsilon}-X_t^{n,*})
\right)d\lambda .
\end{equation}

For \(\alpha=x,y,z\), define
\begin{equation}\label{averaged-f-derivatives}
\bar f_{\alpha,t}^{n,\varepsilon}
:=
\int_0^1
f_\alpha\!\left(
t,
\Theta_t^{n,*}
+\lambda(\Theta_t^{n,\varepsilon}-\Theta_t^{n,*}),
u_t^\varepsilon
\right)d\lambda .
\end{equation}

Finally, set
\begin{equation}\label{averaged-terminal-derivative}
\bar\Phi_x^{n,\varepsilon}
:=
\int_0^1
\Phi_x\!\left(
X_T^{n,*}
+\lambda(X_T^{n,\varepsilon}-X_T^{n,*})
\right)d\lambda .
\end{equation}

By the mean-value formula, the quotient processes satisfy the following
identities exactly.

\begin{lemma}[Quotient equations]
\label{lemma-exact-quotient}
Assume \textbf{(A1)}--\textbf{(A6)}. Then, for every fixed
\(n\ge1\), \(v\in U\), and admissible spike
\((\tau,\varepsilon)\), the process
\(\widehat X^{n,\varepsilon}\) satisfies
\begin{equation}\label{exact-forward-quotient}
\left\{
\begin{aligned}
d\widehat X_t^{n,\varepsilon}
={}&
\left[
\bar b_{x,t}^{n,\varepsilon}
\widehat X_t^{n,\varepsilon}
+
\frac{1}{\varepsilon}
\Delta b_t^n(v)
\mathbf 1_{[\tau,\tau+\varepsilon]}(t)
\right]dt
\\
&+
\bar\sigma_{x,t}^{n,\varepsilon}
\widehat X_t^{n,\varepsilon}\,dW_t,
\\
\widehat X_0^{n,\varepsilon}&=0,
\end{aligned}
\right.
\end{equation}
and
\((\widehat Y^{n,\varepsilon},
\widehat Z^{n,\varepsilon})\) satisfies
\begin{equation}\label{exact-backward-quotient}
\begin{aligned}
\widehat Y_t^{n,\varepsilon}
={}&
\bar\Phi_x^{n,\varepsilon}
\widehat X_T^{n,\varepsilon}
\\
&+
\int_t^T
\Big[
\bar f_{x,s}^{n,\varepsilon}
\widehat X_s^{n,\varepsilon}
+
\bar f_{y,s}^{n,\varepsilon}
\widehat Y_s^{n,\varepsilon}
+
\bar f_{z,s}^{n,\varepsilon}
\widehat Z_s^{n,\varepsilon}
\\
&\qquad\qquad
+
\frac{1}{\varepsilon}
\Delta f_s^n(v)
\mathbf 1_{[\tau,\tau+\varepsilon]}(s)
+
\chi_s^{n,\varepsilon}(v,\tau)
\Big]ds
\\
&-
\int_t^T
\widehat Z_s^{n,\varepsilon}\,dW_s .
\end{aligned}
\end{equation}
\end{lemma}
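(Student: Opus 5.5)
The plan is to subtract the equations satisfied by the unperturbed and perturbed penalized states, divide by \(\varepsilon\), and rewrite every increment of a coefficient by the fundamental theorem of calculus along the segment joining the two states. Only the differentiable coefficients \(b,\sigma,f,\Phi\) are linearized. The penalization increment is left untouched and becomes \(\chi^{n,\varepsilon}\) by definition, so no differentiability of \(y\mapsto(h-y)^+\) is ever needed.

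\textbf{Forward quotient.} Subtract the forward equation in \eqref{optimal-penalized-state} from that in \eqref{perturbed-penalized-state}. Then split the drift increment as
\[
b(t,X_t^{n,\varepsilon},u_t^\varepsilon)-b(t,X_t^{n,*},u_t^*)
=\big[b(t,X_t^{n,\varepsilon},u_t^\varepsilon)-b(t,X_t^{n,*},u_t^\varepsilon)\big]
+\big[b(t,X_t^{n,*},u_t^\varepsilon)-b(t,X_t^{n,*},u_t^*)\big].
\]
\begin{itemize}
\item For the first bracket, use the \(C^1\) regularity in \(x\) from \textbf{(A1)} and apply \(g(1)-g(0)=\int_0^1 g'(\lambda)\,d\lambda\) to \(g(\lambda)=b(t,X_t^{n,*}+\lambda(X_t^{n,\varepsilon}-X_t^{n,*}),u_t^\varepsilon)\). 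This gives \(\bar b_{x,t}^{n,\varepsilon}(X_t^{n,\varepsilon}-X_t^{n,*})\), with \(\bar b_{x,t}^{n,\varepsilon}\) as in \eqref{averaged-bx}.
\item The second bracket vanishes off \([\tau,\tau+\varepsilon]\). On that interval \(u^\varepsilon_t=v\), so it equals \(\Delta b_t^n(v)\mathbf 1_{[\tau,\tau+\varepsilon]}(t)\).
\item Since \(\sigma\) does not depend on the control, its increment is exactly \(\bar\sigma_{x,t}^{n,\varepsilon}(X_t^{n,\varepsilon}-X_t^{n,*})\).
\end{itemize}
Dividing by \(\varepsilon\) and noting that both processes start at \(x\) gives \eqref{exact-forward-quotient} with \(\widehat X_0^{n,\varepsilon}=0\).

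\textbf{Backward quotient.} Subtract the two penalized BSDEs and divide by \(\varepsilon\).
\begin{itemize}
\item The terminal difference \(\Phi(X_T^{n,\varepsilon})-\Phi(X_T^{n,*})\) becomes \(\bar\Phi_x^{n,\varepsilon}(X_T^{n,\varepsilon}-X_T^{n,*})\) by \textbf{(A3)} and the same integral identity.
\item For the generator, split again by first moving \(\Theta\) with the control frozen at \(u_s^\varepsilon\), then changing the control at \(\Theta_s^{n,*}\). The first part is handled by the chain rule along \(\lambda\mapsto\Theta^{n,*}_s+\lambda(\Theta^{n,\varepsilon}_s-\Theta^{n,*}_s)\), using the joint \(C^1\) regularity in \((x,y,z)\) from \textbf{(A2)}. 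This yields \(\bar f_x\widehat X+\bar f_y\widehat Y+\bar f_z\widehat Z\), with the averages \eqref{averaged-f-derivatives}.
\item The second part equals \(\Delta f^n_s(v)\mathbf 1_{[\tau,\tau+\varepsilon]}(s)\).
\item The difference of the penalization terms, divided by \(\varepsilon\), is exactly \(\chi_s^{n,\varepsilon}(v,\tau)\) by \eqref{penalization-quotient}.
\item The stochastic integrals combine to \(\int_t^T\widehat Z_s^{n,\varepsilon}\,dW_s\).
\end{itemize}

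\textbf{Technical checks.} The main point needing care is measurability and integrability, not the algebra.
\begin{itemize}
\item The averaged coefficients must be progressively measurable. This follows because the integrands are jointly measurable in \((\lambda,t,\omega)\), so Fubini applies.
\item The averaged coefficients are bounded by the derivative bounds, so all the linear terms are well-defined and integrable.
\item The remaining terms must be square integrable. \(\Delta b^n\) and \(\Delta f^n\) are controlled by the growth conditions together with the \(\mathcal S^2\) estimates of Proposition \ref{prop-uniform-estimate}. \(\chi^{n,\varepsilon}\) is square integrable for fixed \(n,\varepsilon\), since \(x\mapsto x^+\) is \(1\)-Lipschitz and \(h\) is Lipschitz in \(x\): this gives \(|\chi_s^{n,\varepsilon}|\le n(C|\widehat X_s^{n,\varepsilon}|+|\widehat Y_s^{n,\varepsilon}|)\).
\end{itemize}
Once these checks are made, the identities hold pathwise, so the quotient equations are exact and involve no approximation.
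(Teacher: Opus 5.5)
Your proposal is correct and matches the paper's proof: you split each coefficient increment into a state increment at the frozen control $u^\varepsilon$, handled by the integral mean-value formula, plus a control increment at the unperturbed state, and you keep the penalization difference unchanged as $\varepsilon\chi^{n,\varepsilon}$. Your added measurability and integrability checks are sound; the paper proves the square integrability of $\chi^{n,\varepsilon}$ separately, in Lemma~\ref{lemma-chi-integrability}, using the same $1$-Lipschitz bound on $r\mapsto r^+$.
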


\begin{proof}
We decompose
\[
\begin{aligned}
&
b(t,X_t^{n,\varepsilon},u_t^\varepsilon)
-
b(t,X_t^{n,*},u_t^*)
\\
&=
\Big[
b(t,X_t^{n,\varepsilon},u_t^\varepsilon)
-
b(t,X_t^{n,*},u_t^\varepsilon)
\Big]
+
\Big[
b(t,X_t^{n,*},u_t^\varepsilon)
-
b(t,X_t^{n,*},u_t^*)
\Big].
\end{aligned}
\]
The first difference is treated by the mean-value formula in the state
variable, while the second one equals
\[
\Delta b_t^n(v)
\mathbf 1_{[\tau,\tau+\varepsilon]}(t).
\]
Dividing by \(\varepsilon\) gives
\eqref{exact-forward-quotient}. The diffusion term is treated in the
same way.

Similarly,
\[
\begin{aligned}
&
f(t,\Theta_t^{n,\varepsilon},u_t^\varepsilon)
-
f(t,\Theta_t^{n,*},u_t^*)
\\
&=
\Big[
f(t,\Theta_t^{n,\varepsilon},u_t^\varepsilon)
-
f(t,\Theta_t^{n,*},u_t^\varepsilon)
\Big]
+
\Big[
f(t,\Theta_t^{n,*},u_t^\varepsilon)
-
f(t,\Theta_t^{n,*},u_t^*)
\Big].
\end{aligned}
\]
Applying the mean-value formula to the first difference and using
\[
f(t,\Theta_t^{n,*},u_t^\varepsilon)
-
f(t,\Theta_t^{n,*},u_t^*)
=
\Delta f_t^n(v)
\mathbf 1_{[\tau,\tau+\varepsilon]}(t)
\]
gives the smooth part of \eqref{exact-backward-quotient}. The difference
between the two penalization terms is exactly
\(\varepsilon\chi_t^{n,\varepsilon}(v,\tau)\).
The terminal condition follows from the mean-value formula applied to
\(\Phi\). Hence both quotient equations are exact.
\end{proof}
\begin{lemma}[Integrability of the Penalization Quotient]
\label{lemma-chi-integrability}
Assume \textbf{(A1)}--\textbf{(A6)}. Fix \(n\ge1\),
\(v\in U\), and \(\tau\in[0,T)\). Then, for every sufficiently small
\(\varepsilon>0\),
\[
\chi^{n,\varepsilon}(v,\tau)\in\mathcal H^2(\mathbb R).
\]
\end{lemma}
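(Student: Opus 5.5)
The plan is to exploit that the map \(r\mapsto r^+\) is \(1\)-Lipschitz, so that \(\chi^{n,\varepsilon}(v,\tau)\) is dominated pointwise by quotient processes already known to be square integrable. For fixed \(n\) and \(\varepsilon>0\), progressive measurability of \(\chi^{n,\varepsilon}\) is immediate: it is a continuous function of the progressively measurable processes \(X^{n,\varepsilon},X^{n,*},Y^{n,\varepsilon},Y^{n,*}\), and \(h\) is continuous. The only real content is the bound \(\mathbb E\int_0^T|\chi_t^{n,\varepsilon}|^2dt<\infty\).

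First, by the Lipschitz property of \((\cdot)^+\) and the boundedness of \(h_x\) from \textbf{(A4)}, we have pointwise
\[
|\chi_t^{n,\varepsilon}(v,\tau)|
\le
\frac{n}{\varepsilon}\Big(|h(t,X_t^{n,\varepsilon})-h(t,X_t^{n,*})|+|Y_t^{n,\varepsilon}-Y_t^{n,*}|\Big)
\le
n\Big(\|h_x\|_\infty|\widehat X_t^{n,\varepsilon}|+|\widehat Y_t^{n,\varepsilon}|\Big).
\]
Hence
\[
\mathbb E\int_0^T|\chi_t^{n,\varepsilon}|^2dt\le 2n^2T\Big(\|h_x\|_\infty^2\,\mathbb E\sup_{t}|\widehat X_t^{n,\varepsilon}|^2+\mathbb E\sup_t|\widehat Y_t^{n,\varepsilon}|^2\Big).
\]

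Second, the right-hand side is finite. By Proposition \ref{prop-uniform-estimate}, applied with the admissible controls \(u^\varepsilon\) and \(u^*\), both \(X^{n,\varepsilon},X^{n,*}\) and \(Y^{n,\varepsilon},Y^{n,*}\) lie in \(\mathcal S^2\). Their differences, divided by the fixed positive number \(\varepsilon\), therefore remain in \(\mathcal S^2\). This already gives \(\chi^{n,\varepsilon}\in\mathcal H^2\) for every \(\varepsilon>0\), since \(n\) is fixed and no uniformity is required.

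The main subtlety is only to make sure the statement is not read as demanding a bound uniform in \(\varepsilon\). The phrase ``for every sufficiently small \(\varepsilon\)'' suggests the authors may also want a quantitative estimate. If so, I would additionally use the standard spike estimate \(\mathbb E\sup_t|X_t^{n,\varepsilon}-X_t^{n,*}|^2\le C\varepsilon\). This follows from Gronwall's lemma applied to \eqref{exact-forward-quotient}, using the boundedness of \(U\) and hence of \(\Delta b^n\).

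For the backward part, I would apply the Lipschitz BSDE stability estimate to the penalized generator \(f^n\). Its Lipschitz constant is \(C+n\), so the estimate reads
\[
\mathbb E\sup_t|Y_t^{n,\varepsilon}-Y_t^{n,*}|^2\le C_n\Big(\mathbb E|\Phi(X_T^{n,\varepsilon})-\Phi(X_T^{n,*})|^2+\mathbb E\Big(\int_\tau^{\tau+\varepsilon}|\Delta f^n_s(v)|ds\Big)^2+\cdots\Big)\le C_n\varepsilon.
\]
This yields \(\mathbb E\int_0^T|\chi^{n,\varepsilon}_t|^2dt\le C_n/\varepsilon\). That bound is finite for each fixed \(\varepsilon\), which is all the lemma asserts.
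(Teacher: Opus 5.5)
Your proposal is correct and is essentially the paper's argument. Both rest on the pointwise bound $|\chi_t^{n,\varepsilon}(v,\tau)|\le n\big(\|h_x\|_\infty|\widehat X_t^{n,\varepsilon}|+|\widehat Y_t^{n,\varepsilon}|\big)$; the one difference is that you get square integrability of the quotients directly from $X^{n,\varepsilon},X^{n,*},Y^{n,\varepsilon},Y^{n,*}\in\mathcal S^2$ with $\varepsilon$ fixed, whereas the paper invokes forward and Lipschitz-BSDE stability estimates with $n$-dependent constants, which is more than the qualitative statement needs. In your optional quantitative part, $\Delta b_t^n(v)$ is not bounded but only satisfies $|\Delta b_t^n(v)|\le C(1+|X_t^{n,*}|)$ by \textbf{(A1)} and compactness of $U$, which is still enough for the estimates you state.
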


\begin{proof}
Fix \(n\ge1\) and \(\varepsilon>0\). Since, for fixed \(n\), the
penalized generator
\[
f^n(t,x,y,z,u)
:=
f(t,x,y,z,u)
+
n\big(h(t,x)-y\big)^+
\]
is Lipschitz continuous with respect to \((x,y,z)\), the standard
stability estimates for Lipschitz BSDEs, applied to the two penalized
solutions
\[
(Y^{n,\varepsilon},Z^{n,\varepsilon})
\quad\text{and}\quad
(Y^{n,*},Z^{n,*}),
\]
yield
\[
\begin{aligned}
&\mathbb E\left[
\sup_{0\le t\le T}
|Y_t^{n,\varepsilon}-Y_t^{n,*}|^2
+
\int_0^T
|Z_t^{n,\varepsilon}-Z_t^{n,*}|^2\,dt
\right]
\\
&\qquad\le
C_n\,
\mathbb E\left[
|X_T^{n,\varepsilon}-X_T^{n,*}|^2
+
\int_0^T
|X_t^{n,\varepsilon}-X_t^{n,*}|^2\,dt
\right]
\\
&\qquad\quad
+
C_n\,
\mathbb E\left[
\left(
\int_\tau^{\tau+\varepsilon}
|\Delta f_t^n(v)|\,dt
\right)^2
\right],
\end{aligned}
\]
where \(C_n>0\) may depend on \(n\), and
\[
\Delta f_t^n(v)
:=
f(t,X_t^{n,*},Y_t^{n,*},Z_t^{n,*},v)
-
f(t,X_t^{n,*},Y_t^{n,*},Z_t^{n,*},u_t^*).
\]

On the other hand, the standard stability estimate for the forward
SDE gives
\[
\mathbb E\left[
\sup_{0\le t\le T}
|X_t^{n,\varepsilon}-X_t^{n,*}|^2
\right]
\le
C\,
\mathbb E\left[
\left(
\int_\tau^{\tau+\varepsilon}
|\Delta b_t^n(v)|\,dt
\right)^2
\right]
<\infty,
\]
where
\[
\Delta b_t^n(v)
:=
b(t,X_t^{n,*},v)
-
b(t,X_t^{n,*},u_t^*).
\]
Therefore, for every fixed \(n\) and \(\varepsilon>0\),
\[
\mathbb E\left[
\sup_{0\le t\le T}
|\widehat X_t^{n,\varepsilon}|^2
+
\sup_{0\le t\le T}
|\widehat Y_t^{n,\varepsilon}|^2
+
\int_0^T
|\widehat Z_t^{n,\varepsilon}|^2\,dt
\right]
<\infty .
\]

Now, since the mapping \(r\mapsto r^+\) is \(1\)-Lipschitz and
\(h_x\) is bounded, we have
\[
\begin{aligned}
|\chi_t^{n,\varepsilon}(v,\tau)|
&=
\frac{n}{\varepsilon}
\left|
\big(h(t,X_t^{n,\varepsilon})-Y_t^{n,\varepsilon}\big)^+
-
\big(h(t,X_t^{n,*})-Y_t^{n,*}\big)^+
\right|
\\
&\le
\frac{n}{\varepsilon}
\left(
|h(t,X_t^{n,\varepsilon})-h(t,X_t^{n,*})|
+
|Y_t^{n,\varepsilon}-Y_t^{n,*}|
\right)
\\
&\le
n\left(
\|h_x\|_\infty
|\widehat X_t^{n,\varepsilon}|
+
|\widehat Y_t^{n,\varepsilon}|
\right).
\end{aligned}
\]
Consequently,
\[
\begin{aligned}
\mathbb E\int_0^T
|\chi_t^{n,\varepsilon}(v,\tau)|^2\,dt
&\le
2n^2\|h_x\|_\infty^2
\mathbb E\int_0^T
|\widehat X_t^{n,\varepsilon}|^2\,dt
\\
&\quad+
2n^2
\mathbb E\int_0^T
|\widehat Y_t^{n,\varepsilon}|^2\,dt
<\infty .
\end{aligned}
\]
Hence
\[
\chi^{n,\varepsilon}(v,\tau)
\in\mathcal H^2(\mathbb R).
\]

The constants appearing above may depend on \(n\). No estimate
uniform in \(n\) is required at this stage.
\end{proof}
\section{Adjoint Systems and Penalized Duality}

The quotient equations involve averaged derivatives depending on
\((n,\varepsilon)\). We therefore first introduce auxiliary adjoint
processes adapted to these coefficients. The limiting
adjoint system will be recovered after the penalization and spike limits
are taken.

\subsection{Auxiliary Adjoint Equations}

\begin{definition}\label{def-auxiliary-adjoint}
For every \(n\ge1\) and \(\varepsilon>0\), define
\(P^{n,\varepsilon}\) as the solution of
\begin{equation}\label{auxiliary-adjoint-P}
\left\{
\begin{aligned}
dP_t^{n,\varepsilon}
&=
\bar f_{y,t}^{n,\varepsilon}
P_t^{n,\varepsilon}\,dt
+
\bar f_{z,t}^{n,\varepsilon}
P_t^{n,\varepsilon}\,dW_t,
\\
P_0^{n,\varepsilon}&=1.
\end{aligned}
\right.
\end{equation}

The pair
\((p^{n,\varepsilon},q^{n,\varepsilon})\) is defined by
\begin{equation}\label{auxiliary-adjoint-p}
\left\{
\begin{aligned}
-dp_t^{n,\varepsilon}
={}&
\Big[
(\bar b_{x,t}^{n,\varepsilon})^\top
p_t^{n,\varepsilon}
+
(\bar\sigma_{x,t}^{n,\varepsilon})^\top
q_t^{n,\varepsilon}
\\
&\qquad
+
(\bar f_{x,t}^{n,\varepsilon})^\top
P_t^{n,\varepsilon}
\Big]dt
-
q_t^{n,\varepsilon}\,dW_t,
\\[1mm]
p_T^{n,\varepsilon}
&=
(\bar\Phi_x^{n,\varepsilon})^\top
P_T^{n,\varepsilon}.
\end{aligned}
\right.
\end{equation}
\end{definition}

\begin{proposition}\label{prop-auxiliary-adjoint}
Assume \textbf{(A1)}--\textbf{(A6)}. Then
\eqref{auxiliary-adjoint-P}--\eqref{auxiliary-adjoint-p}
admit unique square-integrable solutions. Moreover, there exists
\(C>0\), independent of \(n\) and \(\varepsilon\), such that
\[
\mathbb E\left[
\sup_{0\le t\le T}|P_t^{n,\varepsilon}|^2
+
\sup_{0\le t\le T}|p_t^{n,\varepsilon}|^2
+
\int_0^T|q_t^{n,\varepsilon}|^2dt
\right]
\le C.
\]
\end{proposition}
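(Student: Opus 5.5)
The plan is to treat the two equations in sequence. The equation for $P^{n,\varepsilon}$ is a scalar linear forward SDE with no source term. The equation for $(p^{n,\varepsilon},q^{n,\varepsilon})$ is a linear BSDE driven by $P^{n,\varepsilon}$. Everything rests on one structural fact: all averaged coefficients $\bar b_{x}^{n,\varepsilon}$, $\bar\sigma_{x}^{n,\varepsilon}$, $\bar f_{x}^{n,\varepsilon}$, $\bar f_{y}^{n,\varepsilon}$, $\bar f_{z}^{n,\varepsilon}$ and $\bar\Phi_x^{n,\varepsilon}$ are convex averages of $b_x,\sigma_x,f_x,f_y,f_z,\Phi_x$. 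By \textbf{(A1)}--\textbf{(A3)} these are bounded by a constant $L$ that does not depend on $(n,\varepsilon)$. This is the key point: the penalization term $n(h-y)^+$ never enters the coefficients, which is exactly why the residual formulation via $\chi^{n,\varepsilon}$ was introduced. Progressive measurability of the coefficients follows because they are integrals in $\lambda$ of measurable functions of adapted processes.

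\emph{Step 1 (the process $P$).} Since the coefficients are bounded and progressively measurable, the linear SDE \eqref{auxiliary-adjoint-P} has the explicit solution
\[
P_t^{n,\varepsilon}=\exp\Big(\int_0^t\big(\bar f_{y,s}^{n,\varepsilon}-\tfrac12|\bar f_{z,s}^{n,\varepsilon}|^2\big)ds+\int_0^t\bar f_{z,s}^{n,\varepsilon}dW_s\Big).
\]
This gives existence, and uniqueness follows from standard Lipschitz SDE theory. For the bound, I will apply It\^o's formula to $|P_t|^2$ and use Gronwall, which gives $\sup_t\mathbb E|P_t|^2\le e^{(2L+L^2)T}$. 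Burkholder--Davis--Gundy then upgrades this to $\mathbb E\sup_t|P_t|^2\le C(L,T)$. The same argument yields $\mathbb E\sup_t|P_t|^4\le C(L,T)$, which I will need for the terminal term in Step 2.

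\emph{Step 2 (the pair $(p,q)$).} Equation \eqref{auxiliary-adjoint-p} is a linear BSDE in $(p,q)$. Its generator is uniformly Lipschitz with constant $L$, and it has the inhomogeneous term $(\bar f_{x}^{n,\varepsilon})^\top P^{n,\varepsilon}$. The terminal value is $\xi=(\bar\Phi_x^{n,\varepsilon})^\top P_T^{n,\varepsilon}$, which is $\mathcal F_T$-measurable and satisfies $\mathbb E|\xi|^2\le L^2\mathbb E|P_T|^2$. The source satisfies $\mathbb E\int_0^T|\bar f_x^\top P|^2dt\le L^2T\,\mathbb E\sup_t|P_t|^2$. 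The Pardoux--Peng theorem then gives a unique solution in $\mathcal S^2\times\mathcal H^2$. The standard a priori estimate gives
\[
\mathbb E\Big[\sup_t|p_t|^2+\int_0^T|q_t|^2dt\Big]\le C(L,T)\Big(\mathbb E|\xi|^2+\mathbb E\int_0^T|\bar f_x^\top P_t|^2dt\Big),
\]
and the constant here depends only on the Lipschitz constant and on $T$. Combining this with Step 1 gives the claimed bound, with $C$ independent of $n$ and $\varepsilon$.

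\emph{Main obstacle.} The only delicate point is checking that no hidden dependence on $n$ enters through the coefficients, and also verifying measurability. The averaged derivatives are evaluated along trajectories that depend on $n$. Their bounds, however, come solely from the sup-norms of the derivatives, so the uniformity holds. I would also remark that the $\mathcal S^2$ estimate for $p$ requires the BDG step in the BSDE estimate, which is standard.
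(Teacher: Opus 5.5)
Your proposal is correct and follows the same route as the paper. The paper's proof observes that the averaged derivatives inherit the uniform bounds of $b_x,\sigma_x,f_x,f_y,f_z,\Phi_x$, so these bounds do not depend on $n$ or $\varepsilon$, and then invokes the standard estimates for linear SDEs and linear BSDEs; your write-up makes those standard steps explicit. The fourth-moment bound on $P^{n,\varepsilon}$ is not needed, since $|\bar\Phi_x^{n,\varepsilon}|\le L$ already gives $\mathbb E|\xi|^2\le L^2\,\mathbb E|P_T^{n,\varepsilon}|^2$, which is exactly what you use in Step 2.
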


\begin{proof}
All the averaged derivatives appearing in
\eqref{auxiliary-adjoint-P}--\eqref{auxiliary-adjoint-p}
are bounded by the same bounds as the original derivatives. The result
therefore follows from the standard estimates for linear stochastic
differential equations and linear backward stochastic differential
equations.
\end{proof}

\subsection{Hamiltonian Increment and Duality}

We keep the Hamiltonian
\begin{equation}\label{penalized-hamiltonian-clean}
H(t,x,y,z,u,p,q,P)
:=
\langle p,b(t,x,u)\rangle
+
\langle q,\sigma(t,x)\rangle
+
P f(t,x,y,z,u).
\end{equation}

Since the diffusion coefficient does not depend on the control, define
the auxiliary Hamiltonian increment by
\begin{equation}\label{exact-Hamiltonian-increment}
\Delta\widehat H_t^{n,\varepsilon}(v)
:=
\left\langle
p_t^{n,\varepsilon},
\Delta b_t^n(v)
\right\rangle
+
P_t^{n,\varepsilon}
\Delta f_t^n(v).
\end{equation}

Equivalently,
\[
\begin{aligned}
\Delta\widehat H_t^{n,\varepsilon}(v)
={}&
H(t,X_t^{n,*},Y_t^{n,*},Z_t^{n,*},
v,p_t^{n,\varepsilon},q_t^{n,\varepsilon},
P_t^{n,\varepsilon})
\\
&-
H(t,X_t^{n,*},Y_t^{n,*},Z_t^{n,*},
u_t^*,p_t^{n,\varepsilon},q_t^{n,\varepsilon},
P_t^{n,\varepsilon}).
\end{aligned}
\]

\begin{lemma}[Duality identity]
\label{lemma-exact-duality}
Assume \textbf{(A1)}--\textbf{(A6)}. Then, for every
\(n\ge1\), every \(v\in U\), and every admissible spike
\((\tau,\varepsilon)\),
\begin{equation}\label{exact-duality-identity}
\frac{J_n(u^\varepsilon)-J_n(u^*)}{\varepsilon}
=
\frac1{\varepsilon}
\mathbb E\int_\tau^{\tau+\varepsilon}
\Delta\widehat H_t^{n,\varepsilon}(v)\,dt
+
\mathbb E\int_0^T
P_t^{n,\varepsilon}
\chi_t^{n,\varepsilon}(v,\tau)\,dt .
\end{equation}
\end{lemma}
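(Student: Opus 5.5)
The identity is exact, so the plan is a pure duality computation: apply Itô's product formula to $\langle p_t^{n,\varepsilon},\widehat X_t^{n,\varepsilon}\rangle$ and to $P_t^{n,\varepsilon}\widehat Y_t^{n,\varepsilon}$, then add the two relations. No limits are involved. The starting point is
\[
\frac{J_n(u^\varepsilon)-J_n(u^*)}{\varepsilon}=\widehat Y_0^{n,\varepsilon}=\mathbb E\big[P_0^{n,\varepsilon}\widehat Y_0^{n,\varepsilon}\big],
\]
which holds because $Y_0^{n,\cdot}$ is deterministic and $P_0^{n,\varepsilon}=1$.

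\textbf{Step 1 (backward component).} We apply Itô's formula to $P_t^{n,\varepsilon}\widehat Y_t^{n,\varepsilon}$ on $[0,T]$, using \eqref{auxiliary-adjoint-P} and \eqref{exact-backward-quotient}. The drift contributions $\bar f_{y}P\widehat Y$ cancel. The cross-variation term $\bar f_{z}P\widehat Z$ cancels against the $\bar f_z\widehat Z$ term of the generator. Taking expectations gives
\[
\widehat Y_0^{n,\varepsilon}=\mathbb E\big[P_T^{n,\varepsilon}\bar\Phi_x^{n,\varepsilon}\widehat X_T^{n,\varepsilon}\big]
+\mathbb E\int_0^T P_t^{n,\varepsilon}\Big[\bar f_{x,t}^{n,\varepsilon}\widehat X_t^{n,\varepsilon}+\tfrac1\varepsilon\Delta f_t^n(v)\mathbf 1_{[\tau,\tau+\varepsilon]}(t)+\chi_t^{n,\varepsilon}(v,\tau)\Big]dt .
\]

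\textbf{Step 2 (forward component).} We apply Itô's formula to $\langle p_t^{n,\varepsilon},\widehat X_t^{n,\varepsilon}\rangle$, using \eqref{auxiliary-adjoint-p} and \eqref{exact-forward-quotient}, with $\widehat X_0^{n,\varepsilon}=0$. The terms $(\bar b_x)^\top p$ and $(\bar\sigma_x)^\top q$ cancel against the linear drift and the cross-variation. Using the terminal condition $p_T^{n,\varepsilon}=(\bar\Phi_x^{n,\varepsilon})^\top P_T^{n,\varepsilon}$, this gives
\[
\mathbb E\big[P_T^{n,\varepsilon}\bar\Phi_x^{n,\varepsilon}\widehat X_T^{n,\varepsilon}\big]
=\mathbb E\int_0^T\Big[-P_t^{n,\varepsilon}\bar f_{x,t}^{n,\varepsilon}\widehat X_t^{n,\varepsilon}+\tfrac1\varepsilon\langle p_t^{n,\varepsilon},\Delta b_t^n(v)\rangle\mathbf 1_{[\tau,\tau+\varepsilon]}(t)\Big]dt .
\]

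\textbf{Step 3 (combination).} Substituting Step 2 into Step 1, the $\bar f_x$ terms cancel. By \eqref{exact-Hamiltonian-increment}, the remaining spike terms combine into $\frac1\varepsilon\mathbb E\int_\tau^{\tau+\varepsilon}\Delta\widehat H_t^{n,\varepsilon}(v)\,dt$. The residual term $\mathbb E\int_0^T P_t^{n,\varepsilon}\chi_t^{n,\varepsilon}\,dt$ is untouched, which gives \eqref{exact-duality-identity}.

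\textbf{Main obstacle: justifying that the stochastic integrals are true martingales.} This is where the only real work lies, since the cancellations above are routine. The local martingale terms are $\int(\bar f_zP\widehat Y+P\widehat Z)\,dW$ and $\int(\langle q,\widehat X\rangle+\langle p,\bar\sigma_x\widehat X\rangle)\,dW$. I would first invoke Proposition \ref{prop-auxiliary-adjoint}, which gives $P,p\in\mathcal S^2$ and $q\in\mathcal H^2$. Lemma \ref{lemma-chi-integrability} and its proof, for fixed $n,\varepsilon$, give $\widehat X,\widehat Y\in\mathcal S^2$ and $\widehat Z,\chi\in\mathcal H^2$. Since $P$ solves a linear SDE with bounded coefficients, it has moments of all orders, and so does $\widehat X$. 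Together with BDG this yields $\mathbb E\big[(\int_0^T|P\widehat Z|^2dt)^{1/2}\big]\le \mathbb E[\sup|P|^2]^{1/2}\,\mathbb E[\int_0^T|\widehat Z|^2dt]^{1/2}<\infty$, and similarly for the other integrands. Hence the integrals are uniformly integrable martingales with zero expectation. If one prefers to avoid the higher-moment argument, localize with stopping times $\tau_k\uparrow T$ and pass to the limit $k\to\infty$ by dominated convergence, using the same $L^1$ bounds on the $\sup$ of the products. The expectations of the $dt$-terms are finite by Cauchy--Schwarz, in particular $\mathbb E\int_0^T|P\chi|\,dt<\infty$, so all rearrangements are legitimate.
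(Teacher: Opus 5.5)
Your proposal is correct and is essentially the paper's argument. The paper applies It\^o's formula to $P_t^{n,\varepsilon}\widehat Y_t^{n,\varepsilon}$ and $\langle p_t^{n,\varepsilon},\widehat X_t^{n,\varepsilon}\rangle$ with exactly the cancellations you describe; it only packages them in the single process $\Gamma_t^{n,\varepsilon}=P_t^{n,\varepsilon}\widehat Y_t^{n,\varepsilon}-\langle p_t^{n,\varepsilon},\widehat X_t^{n,\varepsilon}\rangle$, using $\Gamma_T^{n,\varepsilon}=0$ and $\Gamma_0^{n,\varepsilon}=\widehat Y_0^{n,\varepsilon}$, rather than matching the two terminal expectations as you do. Your BDG argument makes explicit the true-martingale property that the paper only asserts, and it needs only $P,p,\widehat X,\widehat Y\in\mathcal S^2$ and $q,\widehat Z\in\mathcal H^2$, so the higher-moment remark is unnecessary.
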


\begin{proof}
Set
\[
\Gamma_t^{n,\varepsilon}
:=
P_t^{n,\varepsilon}\widehat Y_t^{n,\varepsilon}
-
\left\langle
p_t^{n,\varepsilon},
\widehat X_t^{n,\varepsilon}
\right\rangle .
\]
We apply It\^o's formula separately to the two terms defining
\(\Gamma^{n,\varepsilon}\).

First, by \eqref{exact-backward-quotient} and
\eqref{auxiliary-adjoint-P},
\[
\begin{aligned}
d\!\left(
P_t^{n,\varepsilon}\widehat Y_t^{n,\varepsilon}
\right)
={}&
P_t^{n,\varepsilon}\,d\widehat Y_t^{n,\varepsilon}
+
\widehat Y_t^{n,\varepsilon}\,dP_t^{n,\varepsilon}
+
d\left\langle
P^{n,\varepsilon},\widehat Y^{n,\varepsilon}
\right\rangle_t
\\
={}&
-P_t^{n,\varepsilon}
\bar f_x^{n,\varepsilon}(t)
\widehat X_t^{n,\varepsilon}\,dt
\\
&-
\frac{1}{\varepsilon}
P_t^{n,\varepsilon}
\Delta f_t^n(v)
\mathbf 1_{[\tau,\tau+\varepsilon]}(t)\,dt
\\
&-
P_t^{n,\varepsilon}
\chi_t^{n,\varepsilon}(v,\tau)\,dt
+dM_t^{n,\varepsilon},
\end{aligned}
\]
where the terms containing
\(\bar f_y^{n,\varepsilon}\) and
\(\bar f_z^{n,\varepsilon}\) cancel, and
\(M^{n,\varepsilon}\) is the local martingale given by
\[
M_t^{n,\varepsilon}
=
\int_0^t
P_s^{n,\varepsilon}
\left(
\widehat Z_s^{n,\varepsilon}
+
\bar f_z^{n,\varepsilon}(s)
\widehat Y_s^{n,\varepsilon}
\right)dW_s .
\]

Next, applying It\^o's formula to
\(
\langle
p_t^{n,\varepsilon},
\widehat X_t^{n,\varepsilon}
\rangle
\),
and using \eqref{exact-forward-quotient} together with
\eqref{auxiliary-adjoint-p}, we obtain
\[
\begin{aligned}
d\left\langle
p_t^{n,\varepsilon},
\widehat X_t^{n,\varepsilon}
\right\rangle
={}&
\left\langle
p_t^{n,\varepsilon},
d\widehat X_t^{n,\varepsilon}
\right\rangle
+
\left\langle
\widehat X_t^{n,\varepsilon},
dp_t^{n,\varepsilon}
\right\rangle
+
d\left\langle
p^{n,\varepsilon},
\widehat X^{n,\varepsilon}
\right\rangle_t
\\
={}&
-
P_t^{n,\varepsilon}
\bar f_x^{n,\varepsilon}(t)
\widehat X_t^{n,\varepsilon}\,dt
\\
&+
\frac{1}{\varepsilon}
\left\langle
p_t^{n,\varepsilon},
\Delta b_t^n(v)
\right\rangle
\mathbf 1_{[\tau,\tau+\varepsilon]}(t)\,dt
+dN_t^{n,\varepsilon},
\end{aligned}
\]
where the terms involving
\(\bar b_x^{n,\varepsilon}\) and
\(\bar\sigma_x^{n,\varepsilon}\) cancel, and
\(N^{n,\varepsilon}\) denotes the resulting local martingale.

Subtracting the two identities, the terms containing
\(
P_t^{n,\varepsilon}
\bar f_x^{n,\varepsilon}(t)
\widehat X_t^{n,\varepsilon}
\)
cancel. Hence
\[
\begin{aligned}
d\Gamma_t^{n,\varepsilon}
={}&
-\frac{1}{\varepsilon}
\Big[
\left\langle
p_t^{n,\varepsilon},
\Delta b_t^n(v)
\right\rangle
+
P_t^{n,\varepsilon}\Delta f_t^n(v)
\Big]
\mathbf 1_{[\tau,\tau+\varepsilon]}(t)\,dt
\\
&-
P_t^{n,\varepsilon}
\chi_t^{n,\varepsilon}(v,\tau)\,dt
+d\widetilde M_t^{n,\varepsilon},
\end{aligned}
\]
where
\[
\widetilde M_t^{n,\varepsilon}
:=
M_t^{n,\varepsilon}-N_t^{n,\varepsilon}.
\]
By the definition of
\(\Delta\widehat H_t^{n,\varepsilon}(v)\), this becomes
\[
d\Gamma_t^{n,\varepsilon}
=
-\frac{1}{\varepsilon}
\Delta\widehat H_t^{n,\varepsilon}(v)
\mathbf 1_{[\tau,\tau+\varepsilon]}(t)\,dt
-
P_t^{n,\varepsilon}
\chi_t^{n,\varepsilon}(v,\tau)\,dt
+
d\widetilde M_t^{n,\varepsilon}.
\]

At the terminal time, by
\eqref{exact-backward-quotient} and the terminal condition in
\eqref{auxiliary-adjoint-p},
\[
\widehat Y_T^{n,\varepsilon}
=
\bar\Phi_x^{n,\varepsilon}
\widehat X_T^{n,\varepsilon},
\qquad
p_T^{n,\varepsilon}
=
(\bar\Phi_x^{n,\varepsilon})^\top
P_T^{n,\varepsilon}.
\]
Therefore,
\[
\Gamma_T^{n,\varepsilon}
=
P_T^{n,\varepsilon}
\widehat Y_T^{n,\varepsilon}
-
\left\langle
p_T^{n,\varepsilon},
\widehat X_T^{n,\varepsilon}
\right\rangle
=0.
\]

Integrating the preceding differential identity over \([0,T]\) yields
\[
\begin{aligned}
\Gamma_T^{n,\varepsilon}
-
\Gamma_0^{n,\varepsilon}
={}&
-\frac{1}{\varepsilon}
\int_\tau^{\tau+\varepsilon}
\Delta\widehat H_t^{n,\varepsilon}(v)\,dt
\\
&-
\int_0^T
P_t^{n,\varepsilon}
\chi_t^{n,\varepsilon}(v,\tau)\,dt
+
\widetilde M_T^{n,\varepsilon}
-
\widetilde M_0^{n,\varepsilon}.
\end{aligned}
\]

Under the integrability estimates established above, the stochastic
integrals involved are true martingales. Consequently,
\[
\mathbb E\!\left[
\widetilde M_T^{n,\varepsilon}
-
\widetilde M_0^{n,\varepsilon}
\right]
=0.
\]
Since
\[
\widehat X_0^{n,\varepsilon}=0,
\qquad
P_0^{n,\varepsilon}=1,
\]
we have
\[
\Gamma_0^{n,\varepsilon}
=
\widehat Y_0^{n,\varepsilon}.
\]
Moreover, by the definition of the penalized cost,
\[
\widehat Y_0^{n,\varepsilon}
=
\frac{
J_n(u^\varepsilon)-J_n(u^*)
}{\varepsilon}.
\]

Taking expectations and using
\(\Gamma_T^{n,\varepsilon}=0\), we finally obtain
\[
\frac{
J_n(u^\varepsilon)-J_n(u^*)
}{\varepsilon}
=
\frac{1}{\varepsilon}
\mathbb E
\int_\tau^{\tau+\varepsilon}
\Delta\widehat H_t^{n,\varepsilon}(v)\,dt
+
\mathbb E
\int_0^T
P_t^{n,\varepsilon}
\chi_t^{n,\varepsilon}(v,\tau)\,dt,
\]
which is precisely \eqref{exact-duality-identity}.
\end{proof}

\section{Passage to the Reflected Limit}

We now pass from the exact penalized identity
\eqref{exact-duality-identity} to the reflected control problem.

For every fixed admissible control \(u\), the penalization convergence
gives
\[
(Y^{n,u},Z^{n,u},K^{n,u})
\longrightarrow
(Y^u,Z^u,K^u)
\]
in the corresponding square-integrable spaces.

In particular, for every fixed \(\varepsilon>0\),
\[
\Theta^{n,*}\longrightarrow\Theta^*,
\qquad
\Theta^{n,\varepsilon}\longrightarrow\Theta^\varepsilon,
\]
where
\[
\Theta_t^*:=(X_t^*,Y_t^*,Z_t^*),
\qquad
\Theta_t^\varepsilon
:=
(X_t^\varepsilon,Y_t^\varepsilon,Z_t^\varepsilon).
\]

\subsection{Limit of the Auxiliary Adjoint Systems}

For every fixed \(\varepsilon>0\), define
\[
\bar b_{x,t}^{\varepsilon}
:=
\int_0^1
b_x\!\left(
t,
X_t^*
+\lambda(X_t^\varepsilon-X_t^*),
u_t^\varepsilon
\right)d\lambda ,
\]
\[
\bar\sigma_{x,t}^{\varepsilon}
:=
\int_0^1
\sigma_x\!\left(
t,
X_t^*
+\lambda(X_t^\varepsilon-X_t^*)
\right)d\lambda ,
\]
and, for \(\alpha=x,y,z\),
\[
\bar f_{\alpha,t}^{\varepsilon}
:=
\int_0^1
f_\alpha\!\left(
t,
\Theta_t^*
+\lambda(\Theta_t^\varepsilon-\Theta_t^*),
u_t^\varepsilon
\right)d\lambda .
\]
Also set
\[
\bar\Phi_x^\varepsilon
:=
\int_0^1
\Phi_x\!\left(
X_T^*
+\lambda(X_T^\varepsilon-X_T^*)
\right)d\lambda .
\]

Let \(P^\varepsilon\) solve
\begin{equation}\label{epsilon-limit-adjoint-P}
\left\{
\begin{aligned}
dP_t^\varepsilon
&=
\bar f_{y,t}^{\varepsilon}
P_t^\varepsilon\,dt
+
\bar f_{z,t}^{\varepsilon}
P_t^\varepsilon\,dW_t,
\\
P_0^\varepsilon&=1,
\end{aligned}
\right.
\end{equation}
and let
\((p^\varepsilon,q^\varepsilon)\) solve
\begin{equation}\label{epsilon-limit-adjoint-p}
\left\{
\begin{aligned}
-dp_t^\varepsilon
={}&
\Big[
(\bar b_{x,t}^{\varepsilon})^\top p_t^\varepsilon
+
(\bar\sigma_{x,t}^{\varepsilon})^\top q_t^\varepsilon
+
(\bar f_{x,t}^{\varepsilon})^\top P_t^\varepsilon
\Big]dt
-
q_t^\varepsilon\,dW_t,
\\
p_T^\varepsilon
&=
(\bar\Phi_x^\varepsilon)^\top
P_T^\varepsilon .
\end{aligned}
\right.
\end{equation}

\begin{lemma}[Penalized adjoint stability for a fixed spike]
\label{lemma-fixed-spike-adjoint-stability}
For every fixed \(\varepsilon>0\),
\[
\mathbb E\left[
\sup_{0\le t\le T}
|P_t^{n,\varepsilon}-P_t^\varepsilon|^2
\right]
+
\mathbb E\left[
\sup_{0\le t\le T}
|p_t^{n,\varepsilon}-p_t^\varepsilon|^2
\right]
+
\mathbb E\int_0^T
|q_t^{n,\varepsilon}-q_t^\varepsilon|^2dt
\longrightarrow0
\]
as \(n\to\infty\).
\end{lemma}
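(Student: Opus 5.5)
The argument is a stability estimate for linear SDEs and linear BSDEs with bounded coefficients, combined with dominated convergence for the averaged coefficients. First note that the forward component does not depend on \(n\): \(X^{n,*}=X^*\) and \(X^{n,\varepsilon}=X^\varepsilon\). Hence \(\bar b_x^{n,\varepsilon}=\bar b_x^\varepsilon\), \(\bar\sigma_x^{n,\varepsilon}=\bar\sigma_x^\varepsilon\) and \(\bar\Phi_x^{n,\varepsilon}=\bar\Phi_x^\varepsilon\) identically. Only the averaged generator derivatives \(\bar f_{\alpha}^{n,\varepsilon}\), \(\alpha=x,y,z\), depend on \(n\), through \((Y^{n,*},Z^{n,*})\) and \((Y^{n,\varepsilon},Z^{n,\varepsilon})\).

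\textbf{Step 1: convergence of the coefficients.} By Proposition \ref{prop-penalized-convergence}, applied to \(u^*\) and to \(u^\varepsilon\), we have \(\Theta^{n,*}\to\Theta^*\) and \(\Theta^{n,\varepsilon}\to\Theta^\varepsilon\) in \(\mathcal S^2\times\mathcal S^2\times\mathcal H^2\). Along any subsequence we can extract a further subsequence converging \(dt\otimes d\mathbb P\)-a.e. Since \(f_\alpha\) is continuous in \((x,y,z)\) by (A2), for each \(\lambda\) the integrand in \eqref{averaged-f-derivatives} converges a.e., and it is bounded. Dominated convergence in \(\lambda\) then gives \(\bar f_{\alpha}^{n,\varepsilon}\to\bar f_\alpha^\varepsilon\) a.e. along that subsequence, with a uniform bound \(L\). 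By the subsequence principle, it suffices to prove the lemma along such subsequences.

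\textbf{Step 2: the forward adjoint \(P\).} Set \(\delta P:=P^{n,\varepsilon}-P^\varepsilon\). It solves
\[
d\,\delta P_t=\big(\bar f_{y,t}^{n,\varepsilon}\delta P_t+(\bar f_{y,t}^{n,\varepsilon}-\bar f_{y,t}^{\varepsilon})P_t^\varepsilon\big)dt+\big(\bar f_{z,t}^{n,\varepsilon}\delta P_t+(\bar f_{z,t}^{n,\varepsilon}-\bar f_{z,t}^{\varepsilon})P_t^\varepsilon\big)dW_t,\qquad \delta P_0=0.
\]
The standard \(L^2\) estimate for linear SDEs with coefficients bounded by \(L\) (BDG plus Gronwall) yields
\[
\mathbb E\sup_t|\delta P_t|^2\le C\,\mathbb E\int_0^T\big(|\bar f_{y,t}^{n,\varepsilon}-\bar f_{y,t}^{\varepsilon}|^2+|\bar f_{z,t}^{n,\varepsilon}-\bar f_{z,t}^{\varepsilon}|^2\big)|P_t^\varepsilon|^2dt.
\]
The integrand converges to \(0\) a.e. and is dominated by \(4L^2\sup_s|P_s^\varepsilon|^2\), which is integrable by Proposition \ref{prop-auxiliary-adjoint}. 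Dominated convergence therefore gives \(\mathbb E\sup_t|\delta P_t|^2\to0\).

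\textbf{Step 3: the backward adjoint \((p,q)\).} The pair \((\delta p,\delta q):=(p^{n,\varepsilon}-p^\varepsilon,q^{n,\varepsilon}-q^\varepsilon)\) solves a linear BSDE with the same bounded coefficients \(\bar b_x^\varepsilon,\bar\sigma_x^\varepsilon\). Its terminal value is \((\bar\Phi_x^\varepsilon)^\top\delta P_T\), and its source term is
\[
(\bar f_{x,t}^{n,\varepsilon})^\top\delta P_t+(\bar f_{x,t}^{n,\varepsilon}-\bar f_{x,t}^\varepsilon)^\top P_t^\varepsilon .
\]
The a priori estimate for linear BSDEs bounds \(\mathbb E\sup_t|\delta p_t|^2+\mathbb E\int_0^T|\delta q_t|^2dt\) by
\[
C\Big(\mathbb E|\delta P_T|^2+\mathbb E\int_0^T|\delta P_t|^2dt+\mathbb E\int_0^T|\bar f_{x,t}^{n,\varepsilon}-\bar f_{x,t}^\varepsilon|^2|P_t^\varepsilon|^2dt\Big).
\]
The first two terms vanish by Step 2, and the last vanishes by the same dominated convergence argument as in Step 2.

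\textbf{Main obstacle.} The delicate point is Step 1. The convergence \(Z^{n,u}\to Z^u\) holds only in \(\mathcal H^2\), so we get only a.e. convergence along subsequences, and no rate is available. This forces the subsequence-of-subsequence argument. It also forces dominated convergence with the product \(|\Delta\bar f|^2|P^\varepsilon|^2\), rather than a direct Lipschitz bound, since the \(f_\alpha\) are merely uniformly continuous and not Lipschitz. The domination relies on the boundedness of the averaged derivatives and on the \(n\)-independent moment bound of Proposition \ref{prop-auxiliary-adjoint}.
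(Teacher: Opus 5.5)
Your proposal is correct and follows the same route as the paper. In both, penalization convergence of $\Theta^{n,*}$ and $\Theta^{n,\varepsilon}$ gives convergence of the averaged coefficients, using boundedness and continuity of the derivatives, and the standard stability estimates for linear SDEs and BSDEs then conclude. You make explicit what the paper leaves implicit: only $\bar f_{\alpha}^{n,\varepsilon}$ depends on $n$, since $X^{n,*}=X^*$ and $X^{n,\varepsilon}=X^\varepsilon$, and the product terms $|\bar f_{\alpha,t}^{n,\varepsilon}-\bar f_{\alpha,t}^{\varepsilon}|^2|P_t^\varepsilon|^2$ require your subsequence and dominated-convergence argument rather than mere $L^2$ convergence of the coefficients. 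One small citation point: the moment bound on $P^\varepsilon$ comes from the same linear-SDE estimate that underlies Proposition \ref{prop-auxiliary-adjoint}, not from that proposition itself, which only covers $P^{n,\varepsilon}$.
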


\begin{proof}
For fixed \(\varepsilon>0\), the penalization convergence applied to
\(u^*\) and \(u^\varepsilon\) yields
\[
\Theta^{n,*}\to\Theta^*,
\qquad
\Theta^{n,\varepsilon}\to\Theta^\varepsilon
\]
in the corresponding spaces.

The boundedness and uniform continuity of the derivatives imply that
the averaged coefficients
\[
\bar b_x^{n,\varepsilon},
\quad
\bar\sigma_x^{n,\varepsilon},
\quad
\bar f_x^{n,\varepsilon},
\quad
\bar f_y^{n,\varepsilon},
\quad
\bar f_z^{n,\varepsilon},
\quad
\bar\Phi_x^{n,\varepsilon}
\]
converge in the corresponding \(L^2\)-spaces to their
\(\varepsilon\)-limit counterparts. Standard stability estimates for
linear SDEs and BSDEs then yield the result.
\end{proof}

\subsection{The Limiting Adjoint System}

The adjoint processes associated with the optimal reflected trajectory
are defined by
\begin{equation}\label{limit-adjoint-P}
\left\{
\begin{aligned}
dP_t^*
&=
f_y(t,\Theta_t^*,u_t^*)P_t^*\,dt
+
f_z(t,\Theta_t^*,u_t^*)P_t^*\,dW_t,
\\
P_0^*&=1,
\end{aligned}
\right.
\end{equation}
and
\begin{equation}\label{limit-adjoint-p}
\left\{
\begin{aligned}
-dp_t^*
&=
\Big[
b_x(t,X_t^*,u_t^*)^\top p_t^*
+
\sigma_x(t,X_t^*)^\top q_t^*
+
f_x(t,\Theta_t^*,u_t^*)^\top P_t^*
\Big]dt
-
q_t^*\,dW_t,
\\
p_T^*
&=
\Phi_x(X_T^*)^\top P_T^* .
\end{aligned}
\right.
\end{equation}

\begin{lemma}[Spike stability of the auxiliary adjoints]
\label{lemma-spike-adjoint-stability}
For every fixed \(v\in U\) and for almost every
\(\tau\in[0,T)\),
\[
\mathbb E\left[
\sup_{0\le t\le T}
|P_t^\varepsilon-P_t^*|^2
\right]
+
\mathbb E\left[
\sup_{0\le t\le T}
|p_t^\varepsilon-p_t^*|^2
\right]
+
\mathbb E\int_0^T
|q_t^\varepsilon-q_t^*|^2dt
\longrightarrow0
\]
as \(\varepsilon\downarrow0\).
\end{lemma}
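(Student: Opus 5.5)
We argue by stability of linear SDEs and BSDEs. The key input is that the averaged coefficients $\bar b_x^\varepsilon$, $\bar\sigma_x^\varepsilon$, $\bar f_\alpha^\varepsilon$ ($\alpha=x,y,z$) and $\bar\Phi_x^\varepsilon$ converge, as $\varepsilon\downarrow0$, to $b_x(t,X^*_t,u^*_t)$, $\sigma_x(t,X^*_t)$, $f_\alpha(t,\Theta^*_t,u^*_t)$ and $\Phi_x(X_T^*)$ in every $L^p$ sense. Since all these coefficients are uniformly bounded, dominated convergence reduces this to convergence in measure.

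\textbf{Step 1: convergence of the state.} For the reflected (unpenalized) state we first show
\[
\mathbb E\sup_t|X^\varepsilon_t-X^*_t|^2+\mathbb E\sup_t|Y^\varepsilon_t-Y^*_t|^2+\mathbb E\int_0^T|Z^\varepsilon_t-Z^*_t|^2dt\to0 .
\]
\begin{itemize}
\item For $X$, the classical estimate gives $\mathbb E\sup|X^\varepsilon-X^*|^2\le C\,\mathbb E(\int_\tau^{\tau+\varepsilon}|\Delta b_t(v)|dt)^2\le C\varepsilon$. This uses (A1), compactness of $U$, and the moment bound on $X^*$.
\item For $(Y,Z)$, we invoke the standard a priori stability estimate for reflected BSDEs with Lipschitz generator (El Karoui et al.). It bounds the difference by $\mathbb E|\Phi(X^\varepsilon_T)-\Phi(X^*_T)|^2$, plus $\mathbb E(\int_0^T|f(t,X^\varepsilon,Y^*,Z^*,u^\varepsilon)-f(t,X^*,Y^*,Z^*,u^*)|dt)^2$, plus a term $\big(\mathbb E\sup_t|h(t,X^\varepsilon_t)-h(t,X^*_t)|^2\big)^{1/2}\Psi^{1/2}$, where $\Psi$ is a bound on $\mathbb E[(K^\varepsilon_T)^2+(K^*_T)^2]$ uniform in $\varepsilon$. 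The uniform bound $\Psi$ is available by passing to the limit in Proposition~\ref{prop-uniform-estimate}, whose constant depends only on $\mathbb E\int|u|^2$, itself bounded since $U$ is compact.
\item Each of these terms vanishes as $\varepsilon\downarrow0$, because $h_x$, $\Phi_x$ and $f_x$ are bounded and the $u$-difference of $f$ is supported on an interval of length $\varepsilon$ with square-integrable integrand.
\end{itemize}

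\textbf{Step 2: convergence of the coefficients.} On $[\tau,\tau+\varepsilon]^c$ we have $u^\varepsilon=u^*$. There, uniform continuity of the derivatives in the state variables, together with Step 1 (which gives $\Theta^\varepsilon\to\Theta^*$ in $dt\otimes d\mathbb P$-measure), yields pointwise-in-measure convergence of the averaged derivatives to the optimal ones. On $[\tau,\tau+\varepsilon]$ the integrands are bounded by a constant, so this set contributes $O(\varepsilon)$ to any $L^2(dt\otimes d\mathbb P)$ norm. For $\bar\Phi_x^\varepsilon$ we use $X^\varepsilon_T\to X^*_T$ in $L^2$ and bounded uniform continuity of $\Phi_x$. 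Hence
\[
\mathbb E\int_0^T|\bar\varphi^\varepsilon_t-\varphi_t|^4dt\to0
\]
for each coefficient, with the analogous statement for $\bar\Phi_x^\varepsilon$.

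\textbf{Step 3: stability of the adjoints.}
\begin{itemize}
\item For $P$, write $P^\varepsilon-P^*$ as the solution of the linear SDE with coefficients $\bar f_y^\varepsilon,\bar f_z^\varepsilon$ and forcing $(\bar f_y^\varepsilon-f_y)P^*dt+(\bar f_z^\varepsilon-f_z)P^*dW$. The BDG and Gronwall inequalities give
\[
\mathbb E\sup|P^\varepsilon-P^*|^2\le C\,\mathbb E\int\big(|\bar f_y^\varepsilon-f_y|^2+|\bar f_z^\varepsilon-f_z|^2\big)|P^*|^2dt .
\]
By H\"older's inequality, with $\mathbb E\sup|P^*|^4<\infty$ since $P^*$ is a stochastic exponential with bounded coefficients, and Step 2, the right-hand side tends to $0$.
\item For $(p,q)$, use the linear BSDE stability estimate. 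The difference is controlled by $\mathbb E|(\bar\Phi_x^\varepsilon)^\top P^\varepsilon_T-\Phi_x(X_T^*)^\top P^*_T|^2$ and by the $H^2$ norms of $(\bar b_x^\varepsilon-b_x)^\top p^*$, $(\bar\sigma_x^\varepsilon-\sigma_x)^\top q^*$ and $(\bar f_x^\varepsilon)^\top P^\varepsilon-f_x^\top P^*$. Each vanishes, by the $P$-convergence, boundedness of the coefficients, and dominated convergence; no higher moments of $p^*$ are needed, since the coefficient differences are bounded and converge in measure. The term with $q^*$, which is only in $H^2$, is handled in the same way: $|\bar\sigma_x^\varepsilon-\sigma_x|^2|q^*|^2$ is dominated by $C|q^*|^2$ and tends to $0$ in measure.
\end{itemize}

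\textbf{Main obstacle.} I expect the main difficulty to be Step 1 for $(Y,Z)$, namely the reflected stability with an obstacle that moves with $\varepsilon$. This requires the uniform $K$-bound and the $L^2$-continuity of $h(\cdot,X^\varepsilon)$. The "almost every $\tau$" qualification is not needed in this argument; it is harmless, since it would only enter through Lebesgue points of $\Delta H$ later.
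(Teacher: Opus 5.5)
Your proposal is correct and follows the same route as the paper's proof. It proceeds in three steps: convergence of $(X^\varepsilon,Y^\varepsilon,Z^\varepsilon)$ to $(X^*,Y^*,Z^*)$ by SDE and reflected-BSDE stability; convergence of the averaged derivatives off the spike interval by uniform continuity, with the spike interval contributing only through its vanishing measure; and then standard stability for the linear adjoint equations. You also supply details the paper leaves implicit (the moving-obstacle term with the uniform bound on $K$ in the reflected stability estimate, and the H\"older/dominated-convergence handling of the products with $P^*$, $p^*$, $q^*$), and you are right that the ``almost every $\tau$'' qualification plays no role in this lemma.
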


\begin{proof}
The spike variation satisfies
\[
u_t^\varepsilon=u_t^*
\qquad
\text{outside }[\tau,\tau+\varepsilon].
\]
The standard stability estimates for the forward SDE and the reflected
BSDE imply
\[
X^\varepsilon\longrightarrow X^*
\quad\text{in }\mathcal S^2,
\]
and
\[
Y^\varepsilon\longrightarrow Y^*
\quad\text{in }\mathcal S^2,
\qquad
Z^\varepsilon\longrightarrow Z^*
\quad\text{in }\mathcal H^2.
\]
Hence
\[
\Theta^\varepsilon\longrightarrow\Theta^*
\]
in the corresponding spaces.

Outside the spike interval, \(u^\varepsilon=u^*\), and therefore the
boundedness and uniform continuity of the derivatives imply
\[
\bar b_x^\varepsilon
\to b_x(\cdot,X^*,u^*),
\qquad
\bar\sigma_x^\varepsilon
\to\sigma_x(\cdot,X^*),
\]
and
\[
\bar f_\alpha^\varepsilon
\to
f_\alpha(\cdot,\Theta^*,u^*),
\qquad
\alpha=x,y,z,
\]
in the corresponding \(L^2\)-spaces. The contribution of the spike
interval vanishes as its Lebesgue measure tends to zero. Similarly,
\[
\bar\Phi_x^\varepsilon
\longrightarrow
\Phi_x(X_T^*)
\quad\text{in }L^2.
\]
The conclusion follows from the standard stability estimates for the
linear adjoint equations.
\end{proof}

\subsection{Hamiltonian Limit}

Define
\begin{equation}\label{limit-Hamiltonian-increment}
\Delta H_t^*(v)
:=
H(t,X_t^*,Y_t^*,Z_t^*,v,p_t^*,q_t^*,P_t^*)
-
H(t,X_t^*,Y_t^*,Z_t^*,u_t^*,p_t^*,q_t^*,P_t^*).
\end{equation}

Since \(\sigma\) does not depend on the control,
\[
\Delta H_t^*(v)
=
\left\langle
p_t^*,
b(t,X_t^*,v)-b(t,X_t^*,u_t^*)
\right\rangle
+
P_t^*
\Big[
f(t,\Theta_t^*,v)
-
f(t,\Theta_t^*,u_t^*)
\Big].
\]
\begin{lemma}[Hamiltonian spike limit]
\label{lemma-Hamiltonian-spike-limit}
For every fixed \(v\in U\) and for almost every
\(\tau\in[0,T)\),
\begin{equation}\label{Hamiltonian-double-limit}
\lim_{\varepsilon\downarrow0}
\liminf_{n\to\infty}
\frac1{\varepsilon}
\mathbb E\int_\tau^{\tau+\varepsilon}
\Delta\widehat H_t^{n,\varepsilon}(v)\,dt
=
\mathbb E[\Delta H_\tau^*(v)].
\end{equation}
\end{lemma}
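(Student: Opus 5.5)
The plan is to take the two limits in the order written: first $n\to\infty$ with $\varepsilon$ fixed, then $\varepsilon\downarrow0$, using Lemma~\ref{lemma-fixed-spike-adjoint-stability} for the first limit and Lemma~\ref{lemma-spike-adjoint-stability} together with a Lebesgue differentiation argument for the second.

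\emph{Step 1: the limit in $n$.} For fixed $\varepsilon$, introduce
\[
\Delta\widehat H_t^{\varepsilon}(v):=\langle p_t^\varepsilon,b(t,X_t^*,v)-b(t,X_t^*,u_t^*)\rangle+P_t^\varepsilon\big[f(t,\Theta_t^*,v)-f(t,\Theta_t^*,u_t^*)\big].
\]
The first task is to show
\[
\mathbb E\int_\tau^{\tau+\varepsilon}\big|\Delta\widehat H_t^{n,\varepsilon}(v)-\Delta\widehat H_t^{\varepsilon}(v)\big|\,dt\to0 \quad (n\to\infty).
\]
To do so, split the difference into two kinds of terms:
\[
\langle p^{n,\varepsilon}-p^\varepsilon,\Delta b^n\rangle+(P^{n,\varepsilon}-P^\varepsilon)\Delta f^n
\quad\text{and}\quad
\langle p^\varepsilon,\Delta b^n-\Delta b^*\rangle+P^\varepsilon(\Delta f^n-\Delta f^*),
\]
where $\Delta b^*,\Delta f^*$ denote the increments evaluated along $\Theta^*$.

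\begin{itemize}
\item For the first kind, apply Cauchy--Schwarz together with Lemma~\ref{lemma-fixed-spike-adjoint-stability}. This requires a uniform-in-$n$ $L^2$ bound on $\Delta b^n,\Delta f^n$. That bound follows from the linear growth in (A1)--(A2), compactness of $U$, and Proposition~\ref{prop-uniform-estimate}.
\item For the second kind, use the Lipschitz property of $b,f$ in $(x,y,z)$. The difference $\Delta b^n-\Delta b^*$ is then controlled by $|X^{n,*}-X^*|$; since the forward equation does not depend on $n$, this vanishes identically. The difference $\Delta f^n-\Delta f^*$ is controlled by $|Y^{n,*}-Y^*|+|Z^{n,*}-Z^*|$, which tends to zero in $L^2$ by Proposition~\ref{prop-penalized-convergence}. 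Combine this with the bounds on $P^\varepsilon,p^\varepsilon$ given by Proposition~\ref{prop-auxiliary-adjoint}, which pass to the limit.
\end{itemize}

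Hence the $\liminf_n$ is actually a limit, equal to $\frac1\varepsilon\mathbb E\int_\tau^{\tau+\varepsilon}\Delta\widehat H_t^{\varepsilon}(v)\,dt$.

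\emph{Step 2: the limit in $\varepsilon$.} Write
\[
\frac1\varepsilon\mathbb E\int_\tau^{\tau+\varepsilon}\Delta\widehat H_t^{\varepsilon}(v)\,dt=\frac1\varepsilon\mathbb E\int_\tau^{\tau+\varepsilon}\Delta H_t^*(v)\,dt+R_\varepsilon,
\]
where $R_\varepsilon$ collects terms of the form $\langle p^\varepsilon-p^*,\Delta b^*\rangle+(P^\varepsilon-P^*)\Delta f^*$. Since $t\mapsto\mathbb E[\Delta H_t^*(v)]$ is integrable (by square integrability of the adjoints and of the coefficients), the Lebesgue differentiation theorem gives convergence of the first term to $\mathbb E[\Delta H^*_\tau(v)]$ for a.e.\ $\tau$.

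\emph{The main obstacle} is $R_\varepsilon$. The factor $1/\varepsilon$ over an interval of length $\varepsilon$ means a plain $L^2$ bound is not enough; one needs $\sup_t$-type control. Use
\[
|R_\varepsilon|\le \Big(\mathbb E\sup_t|p^\varepsilon_t-p^*_t|^2+\mathbb E\sup_t|P^\varepsilon_t-P^*_t|^2\Big)^{1/2}\Big(\frac1{\varepsilon}\mathbb E\int_\tau^{\tau+\varepsilon}\big(|\Delta b_t^*|^2+|\Delta f_t^*|^2\big)\,dt\Big)^{1/2}.
\]
The first factor tends to zero by Lemma~\ref{lemma-spike-adjoint-stability}, which is stated in $\sup$-norm. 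The second factor is bounded for a.e.\ $\tau$ by Lebesgue differentiation applied to the integrable function $t\mapsto\mathbb E(|\Delta b^*_t|^2+|\Delta f^*_t|^2)$.

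One subtlety is that the exceptional null set of $\tau$ must not depend on $\varepsilon$. Choose $\tau$ to be a common Lebesgue point of the countably many integrable functions involved, which is possible for each fixed $v$. Combining Steps 1 and 2 yields \eqref{Hamiltonian-double-limit}.
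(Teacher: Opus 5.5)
Your proposal is correct and follows the paper's proof essentially step for step. The $n$-limit is taken first via Lemma~\ref{lemma-fixed-spike-adjoint-stability}; the $\varepsilon$-limit then uses the same split into an adjoint-difference term (the paper's $I_\varepsilon$, bounded by the same $\sup$-norm Cauchy--Schwarz estimate with Lebesgue differentiation of $t\mapsto\mathbb E|\Delta b_t^*(v)|^2$ and $t\mapsto\mathbb E|\Delta f_t^*(v)|^2$) and a pure Lebesgue-differentiation term (the paper's $J_\varepsilon$). Your Step~1 also fills in details the paper only asserts: $X^{n,*}=X^*$, so the $\Delta b$ part of the difference vanishes, and the $\Delta f$ part is controlled by Proposition~\ref{prop-penalized-convergence} and the uniform bounds of Proposition~\ref{prop-uniform-estimate}.
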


\begin{proof}
Fix \(\varepsilon>0\). By the convergence of the penalized states and
Lemma \ref{lemma-fixed-spike-adjoint-stability}, we have
\[
\begin{aligned}
&
\frac1{\varepsilon}
\mathbb E\int_\tau^{\tau+\varepsilon}
\Delta\widehat H_t^{n,\varepsilon}(v)\,dt
\\
&\qquad\longrightarrow
\frac1{\varepsilon}
\mathbb E\int_\tau^{\tau+\varepsilon}
\Delta\widehat H_t^\varepsilon(v)\,dt ,
\qquad n\to\infty,
\end{aligned}
\]
where
\[
\begin{aligned}
\Delta\widehat H_t^\varepsilon(v)
:={}&
\left\langle
p_t^\varepsilon,
b(t,X_t^*,v)-b(t,X_t^*,u_t^*)
\right\rangle
\\
&+
P_t^\varepsilon
\Big[
f(t,\Theta_t^*,v)
-
f(t,\Theta_t^*,u_t^*)
\Big].
\end{aligned}
\]

It remains to identify the limit as
\(\varepsilon\downarrow0\). For convenience, set
\[
\Delta b_t^*(v)
:=
b(t,X_t^*,v)-b(t,X_t^*,u_t^*)
\]
and
\[
\Delta f_t^*(v)
:=
f(t,\Theta_t^*,v)-f(t,\Theta_t^*,u_t^*).
\]
Then
\[
\Delta\widehat H_t^\varepsilon(v)
=
\left\langle p_t^\varepsilon,\Delta b_t^*(v)\right\rangle
+
P_t^\varepsilon\Delta f_t^*(v),
\]
whereas
\[
\Delta H_t^*(v)
=
\left\langle p_t^*,\Delta b_t^*(v)\right\rangle
+
P_t^*\Delta f_t^*(v).
\]

We decompose
\[
\begin{aligned}
&
\frac1{\varepsilon}
\mathbb E\int_\tau^{\tau+\varepsilon}
\Delta\widehat H_t^\varepsilon(v)\,dt
-
\mathbb E[\Delta H_\tau^*(v)]
\\
={}&
\frac1{\varepsilon}
\mathbb E\int_\tau^{\tau+\varepsilon}
\Big(
\Delta\widehat H_t^\varepsilon(v)
-
\Delta H_t^*(v)
\Big)\,dt
\\
&+
\left[
\frac1{\varepsilon}
\mathbb E\int_\tau^{\tau+\varepsilon}
\Delta H_t^*(v)\,dt
-
\mathbb E[\Delta H_\tau^*(v)]
\right]
\\
=:{}& I_\varepsilon+J_\varepsilon .
\end{aligned}
\]

We first consider \(I_\varepsilon\). By the preceding definitions,
\[
\begin{aligned}
\Delta\widehat H_t^\varepsilon(v)
-
\Delta H_t^*(v)
={}&
\left\langle
p_t^\varepsilon-p_t^*,
\Delta b_t^*(v)
\right\rangle
\\
&+
\big(P_t^\varepsilon-P_t^*\big)
\Delta f_t^*(v).
\end{aligned}
\]
Hence, using the supremum norm and the Cauchy--Schwarz
inequality,
\[
\begin{aligned}
|I_\varepsilon|
\le{}&
\left(
\mathbb E\sup_{0\le t\le T}
|p_t^\varepsilon-p_t^*|^2
\right)^{1/2}
\left(
\frac1{\varepsilon}
\mathbb E\int_\tau^{\tau+\varepsilon}
|\Delta b_t^*(v)|^2\,dt
\right)^{1/2}
\\
&+
\left(
\mathbb E\sup_{0\le t\le T}
|P_t^\varepsilon-P_t^*|^2
\right)^{1/2}
\left(
\frac1{\varepsilon}
\mathbb E\int_\tau^{\tau+\varepsilon}
|\Delta f_t^*(v)|^2\,dt
\right)^{1/2}.
\end{aligned}
\]
By Lemma \ref{lemma-spike-adjoint-stability},
\[
\mathbb E\sup_{0\le t\le T}
|p_t^\varepsilon-p_t^*|^2
\longrightarrow0,
\qquad
\mathbb E\sup_{0\le t\le T}
|P_t^\varepsilon-P_t^*|^2
\longrightarrow0.
\]
Moreover, for almost every \(\tau\in[0,T)\), which is a Lebesgue
point of the functions
\[
t\longmapsto
\mathbb E|\Delta b_t^*(v)|^2,
\qquad
t\longmapsto
\mathbb E|\Delta f_t^*(v)|^2,
\]
the Lebesgue differentiation theorem gives
\[
\frac1{\varepsilon}
\mathbb E\int_\tau^{\tau+\varepsilon}
|\Delta b_t^*(v)|^2\,dt
\longrightarrow
\mathbb E|\Delta b_\tau^*(v)|^2,
\]
and
\[
\frac1{\varepsilon}
\mathbb E\int_\tau^{\tau+\varepsilon}
|\Delta f_t^*(v)|^2\,dt
\longrightarrow
\mathbb E|\Delta f_\tau^*(v)|^2 .
\]
Hence these averaged terms are bounded for sufficiently small
\(\varepsilon\), and together with the adjoint stability result we obtain
\[
I_\varepsilon\longrightarrow0 .
\]

We next consider \(J_\varepsilon\). Since
\[
t\longmapsto
\mathbb E[\Delta H_t^*(v)]
\]
is integrable, the Lebesgue differentiation theorem yields, for
almost every \(\tau\in[0,T)\),
\[
\begin{aligned}
J_\varepsilon
&=
\frac1{\varepsilon}
\int_\tau^{\tau+\varepsilon}
\mathbb E[\Delta H_t^*(v)]\,dt
-
\mathbb E[\Delta H_\tau^*(v)]
\\
&\longrightarrow0,
\qquad
\varepsilon\downarrow0.
\end{aligned}
\]
Therefore,
\[
\frac1{\varepsilon}
\mathbb E\int_\tau^{\tau+\varepsilon}
\Delta\widehat H_t^\varepsilon(v)\,dt
\longrightarrow
\mathbb E[\Delta H_\tau^*(v)].
\]

Combining this limit with the fixed-\(\varepsilon\) convergence
as \(n\to\infty\), we obtain
\[
\lim_{\varepsilon\downarrow0}
\liminf_{n\to\infty}
\frac1{\varepsilon}
\mathbb E\int_\tau^{\tau+\varepsilon}
\Delta\widehat H_t^{n,\varepsilon}(v)\,dt
=
\mathbb E[\Delta H_\tau^*(v)],
\]
which proves \eqref{Hamiltonian-double-limit}.
\end{proof}

\subsection{Residual Reflection Contribution}

The contribution generated by the penalization of the reflection is
kept in residual form.

\begin{definition}\label{definition-reflection-residual}
For every fixed \(v\in U\) and for almost every
\(\tau\in[0,T)\), define
\begin{equation}\label{reflection-residual}
\mathcal R(\tau,v;u^*)
:=
\liminf_{\varepsilon\downarrow0}
\liminf_{n\to\infty}
\mathbb E\int_0^T
P_t^{n,\varepsilon}
\chi_t^{n,\varepsilon}(v,\tau)\,dt .
\end{equation}
\end{definition}

\begin{remark}\label{remark-order-limits}
The order of limits in \eqref{reflection-residual} is essential.
For every fixed \(\varepsilon>0\), the penalization scheme converges as
\(n\to\infty\) to the reflected system corresponding to the fixed spike
control \(u^\varepsilon\). Only after this penalization limit is taken
do we let the length of the spike interval tend to zero. Thus the order
\[
n\to\infty
\quad\text{first, followed by}\quad
\varepsilon\downarrow0
\]
is the one naturally induced by the optimality of the original
reflected problem.
\end{remark}

\subsection{Reflected Variational Inequality}

\begin{theorem}[Reflected variational inequality]
\label{thm-reflected-vi}
Assume \textbf{(A1)}--\textbf{(A6)}, let \(u^*\in\mathcal U_{ad}\) be an optimal control of the
reflected control problem.

Then, for every fixed \(v\in U\) and for almost every
\(\tau\in[0,T)\),
\begin{equation}\label{reflected-variational-ineq-final}
\mathbb E[\Delta H_\tau^*(v)]
+
\mathcal R(\tau,v;u^*)
\ge 0.
\end{equation}
Equivalently,
\begin{equation}\label{reflected-variational-ineq-expanded}
\begin{aligned}
&
\mathbb E\Big[
H(\tau,X_\tau^*,Y_\tau^*,Z_\tau^*,v,
p_\tau^*,q_\tau^*,P_\tau^*)
\\
&\qquad\qquad
-
H(\tau,X_\tau^*,Y_\tau^*,Z_\tau^*,u_\tau^*,
p_\tau^*,q_\tau^*,P_\tau^*)
\Big]
\\
&\qquad
+
\mathcal R(\tau,v;u^*)
\ge0 .
\end{aligned}
\end{equation}
\end{theorem}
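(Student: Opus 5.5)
The plan is to start from the exact duality identity of Lemma \ref{lemma-exact-duality} and pass to the limits in the order fixed by Remark \ref{remark-order-limits}: first $n\to\infty$, then $\varepsilon\downarrow0$.

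Fix $v\in U$ and a point $\tau\in[0,T)$ lying in the full-measure set where Lemmas \ref{lemma-spike-adjoint-stability} and \ref{lemma-Hamiltonian-spike-limit} hold. Fix also an admissible $\varepsilon\in(0,\delta]$. Write
\[
A_{n,\varepsilon}:=\frac1{\varepsilon}\mathbb E\int_\tau^{\tau+\varepsilon}\Delta\widehat H_t^{n,\varepsilon}(v)\,dt,
\qquad
B_{n,\varepsilon}:=\mathbb E\int_0^T P_t^{n,\varepsilon}\chi_t^{n,\varepsilon}(v,\tau)\,dt .
\]
Then \eqref{exact-duality-identity} reads
\[
\frac{J_n(u^\varepsilon)-J_n(u^*)}{\varepsilon}=A_{n,\varepsilon}+B_{n,\varepsilon}.
\]
Both sides are finite by Proposition \ref{prop-auxiliary-adjoint} and Lemma \ref{lemma-chi-integrability}.

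\textbf{Limit in $n$.} By Proposition \ref{prop-penalized-convergence}, applied to $u^*$ and to $u^\varepsilon$, we have $J_n(u^\varepsilon)\to J(u^\varepsilon)$ and $J_n(u^*)\to J(u^*)$. Optimality of $u^*$ then gives
\[
\lim_{n\to\infty}\frac{J_n(u^\varepsilon)-J_n(u^*)}{\varepsilon}=\frac{J(u^\varepsilon)-J(u^*)}{\varepsilon}\ge0 .
\]
From the proof of Lemma \ref{lemma-Hamiltonian-spike-limit}, which uses Lemma \ref{lemma-fixed-spike-adjoint-stability}, the sequence $A_{n,\varepsilon}$ actually converges as $n\to\infty$, to a limit $A_\varepsilon$. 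The limit of the left-hand side and the limit of $A_{n,\varepsilon}$ both exist, so $B_{n,\varepsilon}$ converges as well. Therefore $\liminf_n B_{n,\varepsilon}=\lim_n B_{n,\varepsilon}$, and
\[
0\le \frac{J(u^\varepsilon)-J(u^*)}{\varepsilon}=A_\varepsilon+\liminf_{n\to\infty}B_{n,\varepsilon}.
\]

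\textbf{Limit in $\varepsilon$.} By Lemma \ref{lemma-Hamiltonian-spike-limit}, $A_\varepsilon\to\mathbb E[\Delta H_\tau^*(v)]$ as $\varepsilon\downarrow0$. Take $\liminf_{\varepsilon\downarrow0}$ in the preceding inequality. Since $A_\varepsilon$ converges, the $\liminf$ of the sum equals the limit of $A_\varepsilon$ plus the $\liminf$ of the second term. Using Definition \ref{definition-reflection-residual}, this gives
\[
0\le \mathbb E[\Delta H_\tau^*(v)]+\mathcal R(\tau,v;u^*),
\]
which is \eqref{reflected-variational-ineq-final}. The expanded form \eqref{reflected-variational-ineq-expanded} is just the definition \eqref{limit-Hamiltonian-increment} of $\Delta H_\tau^*(v)$. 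The $q$-terms cancel there because $\sigma$ does not depend on the control.

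\textbf{Main obstacle.} The delicate step is the bookkeeping of the $\liminf$'s together with the finiteness of $\mathcal R$. The residual $B_{n,\varepsilon}$ has no bound uniform in $n$: Lemma \ref{lemma-chi-integrability} only gives $n$-dependent constants. Its behaviour is therefore controlled only through the identity, as the difference of two convergent quantities. This is why I would first establish convergence of $A_{n,\varepsilon}$ as $n\to\infty$, and not merely use the $\liminf$ in Lemma \ref{lemma-Hamiltonian-spike-limit}; otherwise the inequality $\liminf(a+b)\le\lim a+\liminf b$ could point the wrong way.

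If $\mathcal R=+\infty$, the inequality is trivial. It cannot equal $-\infty$: $A_\varepsilon$ is bounded for small $\varepsilon$, and $\liminf_n B_{n,\varepsilon}\ge -A_\varepsilon$. So no further estimate on the reflection term is needed.
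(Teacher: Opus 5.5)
Your proposal is correct and follows essentially the same route as the paper. Both arguments start from the duality identity \eqref{exact-duality-identity} and pass to \(n\to\infty\) at fixed \(\varepsilon\), using the convergence of \(J_n\) and the genuine convergence of the Hamiltonian term; they then take \(\liminf_{\varepsilon\downarrow0}\) via Lemma \ref{lemma-Hamiltonian-spike-limit} and the definition of \(\mathcal R(\tau,v;u^*)\), and your extra remarks (that \(B_{n,\varepsilon}\) actually converges as a difference of convergent sequences, and that \(\mathcal R>-\infty\)) are harmless refinements the paper leaves implicit.
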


\begin{proof}
Fix \(v\in U\) and a Lebesgue point
\(\tau\in[0,T)\) for the quantities appearing below, and let
\(u^\varepsilon\) be the corresponding spike variation.

Since \(u^*\) is optimal for the original reflected control problem,
\begin{equation}\label{reflected-optimality-spike}
J(u^\varepsilon)-J(u^*)\ge0 .
\end{equation}

For every fixed \(\varepsilon>0\), the convergence of the penalization
scheme applied separately to \(u^\varepsilon\) and \(u^*\) gives
\[
J_n(u^\varepsilon)\longrightarrow J(u^\varepsilon),
\qquad
J_n(u^*)\longrightarrow J(u^*).
\]
Consequently,
\begin{equation}\label{fixed-epsilon-cost-limit}
\liminf_{n\to\infty}
\frac{
J_n(u^\varepsilon)-J_n(u^*)
}{\varepsilon}
=
\frac{
J(u^\varepsilon)-J(u^*)
}{\varepsilon}
\ge0 .
\end{equation}

On the other hand, the exact duality identity
\eqref{exact-duality-identity} gives, for every \(n\ge1\),
\[
\begin{aligned}
\frac{
J_n(u^\varepsilon)-J_n(u^*)
}{\varepsilon}
={}&
\frac1{\varepsilon}
\mathbb E\int_\tau^{\tau+\varepsilon}
\Delta\widehat H_t^{n,\varepsilon}(v)\,dt
\\
&+
\mathbb E\int_0^T
P_t^{n,\varepsilon}
\chi_t^{n,\varepsilon}(v,\tau)\,dt .
\end{aligned}
\]

Taking \(n\to\infty\) for the fixed value of \(\varepsilon\), and using
\eqref{fixed-epsilon-cost-limit}, yields
\begin{equation}\label{fixed-epsilon-variational-limit}
\begin{aligned}
0
\le{}&
\liminf_{n\to\infty}
\Bigg[
\frac1{\varepsilon}
\mathbb E\int_\tau^{\tau+\varepsilon}
\Delta\widehat H_t^{n,\varepsilon}(v)\,dt
\\
&\qquad\qquad+
\mathbb E\int_0^T
P_t^{n,\varepsilon}
\chi_t^{n,\varepsilon}(v,\tau)\,dt
\Bigg].
\end{aligned}
\end{equation}

For each fixed \(\varepsilon>0\), Lemma
\ref{lemma-fixed-spike-adjoint-stability} and the convergence of the
penalized states imply that the Hamiltonian term has an actual limit.
Therefore,
\[
\begin{aligned}
0
\le{}&
\frac1{\varepsilon}
\mathbb E\int_\tau^{\tau+\varepsilon}
\Delta\widehat H_t^\varepsilon(v)\,dt
\\
&+
\liminf_{n\to\infty}
\mathbb E\int_0^T
P_t^{n,\varepsilon}
\chi_t^{n,\varepsilon}(v,\tau)\,dt .
\end{aligned}
\]

Taking \(\liminf_{\varepsilon\downarrow0}\) gives
\[
\begin{aligned}
0
\le{}&
\liminf_{\varepsilon\downarrow0}
\frac1{\varepsilon}
\mathbb E\int_\tau^{\tau+\varepsilon}
\Delta\widehat H_t^\varepsilon(v)\,dt
\\
&+
\liminf_{\varepsilon\downarrow0}
\liminf_{n\to\infty}
\mathbb E\int_0^T
P_t^{n,\varepsilon}
\chi_t^{n,\varepsilon}(v,\tau)\,dt .
\end{aligned}
\]

By Lemma \ref{lemma-Hamiltonian-spike-limit}, the first term converges to
\[
\mathbb E[\Delta H_\tau^*(v)],
\]
while, by Definition \ref{definition-reflection-residual}, the second
term is precisely
\[
\mathcal R(\tau,v;u^*).
\]
Hence
\[
\mathbb E[\Delta H_\tau^*(v)]
+
\mathcal R(\tau,v;u^*)
\ge0 ,
\]
which proves \eqref{reflected-variational-ineq-final}.
\end{proof}

\begin{remark}\label{remark-final-structure}
The reflected variational inequality retains the classical Hamiltonian associated with the smooth coefficients \(b,\sigma,f\), while the nonsmooth effect of the obstacle does not enter the limiting adjoint equations. Instead, it is retained through the residual term \(\mathcal R(\tau,v;u^*)\), which should be viewed as an intrinsic first-order trace of the reflection mechanism rather than as an auxiliary remainder produced by the penalization procedure.

Indeed, the nonsmooth dependence of the increasing reflection process on perturbations of the state prevents, in general, a classical linearization of the reflection operator. The penalization scheme therefore separates the smooth Hamiltonian contribution from the singular effect generated by the active obstacle, and the latter survives in the limit precisely through \(\mathcal R(\tau,v;u^*)\).

In this sense, the resulting variational inequality retains the full influence of the reflection without imposing an unjustified differentiability structure on the reflection process. A sharper characterization of \(\mathcal R(\tau,v;u^*)\), for instance in terms of a directional variation of the reflection measure or of the active contact set, would require additional regularity or structural assumptions and remains beyond the scope of the present work.
\end{remark}

\section{Conclusion}

In this paper, we have studied necessary optimality conditions for stochastic
control problems governed by decoupled forward--backward stochastic differential
equations with a reflected backward component. The main difficulty arises from
the nonsmooth nature of the reflection term, which prevents a direct application
of classical variational methods.

To overcome this difficulty, we have employed a penalization approach and derived
first-order variational equations for the penalized systems using spike
perturbations of the control. A central feature of our analysis is the separation
of the reflection contribution from the smooth linearized structure through a
residual term. This approach avoids differentiating the nonsmooth penalization
operator and allows the adjoint equations to retain the classical Hamiltonian
form of stochastic control problems.

By combining penalization convergence, variational analysis, and duality
arguments, we have obtained a reflected Pontryagin-type variational inequality
for the original control problem. The resulting formulation explicitly preserves
the influence of the obstacle through a residual reflection contribution, while
maintaining a tractable adjoint framework.

The approach developed in this work provides a useful framework for treating
reflected stochastic control problems in which the obstacle depends on the
controlled state dynamics. Future research directions include extending the
present analysis to fully coupled reflected FBSDEs, control problems with jumps
or mean-field interactions, and more general obstacle structures.

\section*{Data Availability Statement}
Data sharing not applicable to this article as no datasets were generated or analyzed during the current study.

\section*{Conflict of Interest Statement}
The authors declare that there are no conflicts of interest regarding the publication of this article.

\section*{Funding Statement}
This research did not receive any specific grant from funding agencies in the public, commercial, or not-for-profit sectors.
\bibliographystyle{plain}
\bibliography{references}

\end{document}